\theoremstyle{plain}
\newtheorem{theorem}{Theorem}[section]
\newtheorem{prop}[theorem]{Proposition}
\newtheorem{cor}[theorem]{Corollary} 
\newtheorem{lemma}[theorem]{Lemma}
\theoremstyle{definition}
\newtheorem*{ex*}{Example}
\newcommand\sO{{\mathcal O}}
\newcommand\sK{{\mathcal K}}
\newcommand\sH{{\mathcal H}}
\newcommand\sP{{\mathcal P}}
\newcommand{\codim}{\rm codim \,}
\newcommand{\ddim}{\rm dim \,}
\newcommand\pp{{\mathbf{P}}}
\newcommand\zz{{\mathbf{Z}}}
\newcommand\cc{{\mathbf{C}}}
\newcommand\llll{{\mathbf{L}}}
\newcommand\gggg{{\mathbf{G}}}
\title[Inequalities for the Hodge numbers]{Inequalities for the Hodge numbers of irregular compact K\"{a}hler manifolds}
\author{Luigi Lombardi}
\address{Department of Mathematics, Statistics, and Computer Science\\
University of Illinois at Chicago, 851 S. Morgan Street, Chicago, IL, 60607}
\email{\url{lombardi@math.uic.edu}}
\begin{document}

\begin{abstract}
Based on work of R. Lazarsfeld and M. Popa, we use the derivative complex associated to the bundle of the holomorphic 
$p$-forms to provide inequalities for all the Hodge numbers of a special class of irregular compact
K\"{a}hler manifolds. For $3$-folds and $4$-folds we give an asymptotic bound for all the Hodge numbers in terms 
of the irregularity. As a byproduct, via the BGG correspondence, we also bound the regularity of the exterior 
cohomology modules of bundles of holomorphic $p$-forms.
\end{abstract}

\maketitle

\section{Introduction}

Given an irregular compact K\"{a}hler manifold $X$, a problem one tries to understand is under which hypotheses there exist 
relations between its various Hodge numbers $h^{p,q}(X)$. Along these lines, one can ask whether there exist
formulas for the $h^{p,q}(X)$'s in terms of the fundamental invariant $q(X)=h^{1,0}(X)$, the \emph{irregularity} of $X$. 
% A first result in this direction was given by G. Castelnuovo and M. De Franchis 
% which asserts that if $X$ is a surface that does not admit any
% surjective morphisms with connected fibers onto a smooth curve of genus at least two (i.e. \emph{irrational pencils of genus} $\geq 2$), then
% \begin{equation}\label{cdf}
% h^{0,2}(X)\geq 2q(X)-3 
% \end{equation}
% where $q(X)$ is the irregularity of $X$. This inequality is called \emph{Castelnuovo-De Franchis inequality} and its
% proof relies on an argument involving holomorphic one-forms on $X$, which is the content of the Castelnuovo-De Franchis Lemma (cf. \cite{Ca},
% \cite{DF}, \cite{BHPV} IV.5.4). 
A classical result in this direction is the Castelnuovo-De Franchis inequality $h^{0,2}(X)\geq 2q(X)-3$ 
(see \cite{BHPV} IV.5.2), which holds for surfaces that do not carry any fibrations onto a smooth curve of genus $g\geq 2$.
This was generalized by F. Catanese to higher dimensional manifolds as follows: if a manifold $X$ does not admit
any higher irrational pencil\footnote{A \emph{higher irrational pencil} is a surjective map with connected fibers $f:X\longrightarrow
Y$ having the property that any resolution singularities of $Y$ is of maximal Albanese dimension and with non-surjective Albanese map.} 
then $h^{0,k}(X)\geq k(q(X)-k)+1$ for all $k$ (\emph{cf.} \cite{CAT}), by means of sophisticated arguments involving the exterior algebra
of holomorphic forms. 
% The analogous in higher dimension of irrational pencils of genus $\geq 2$ for surfaces, are the \emph{higher irregular pencils}, 
% i.e. surjective morphisms $f:X\longrightarrow Y$
% with connected fibers onto a lower dimensional normal variety $Y$ having the property 
% that any smooth model of $Y$ has maximal Albanese dimension and non surjective
% Albanese map. In fact in \cite{CAT} F. Catanese, by generalizing the Castelnuovo-De Franchis Lemma for higher dimensional varieties, proved 
% that if $X$ does not carry any higher irrational pencils then
% $$h^{0,k}(X)\geq k(q(X)-k)+1$$ for any $k$, in particular this includes the Castelnuovo-De Franchis inequality (cf. \cite{CAT}, \cite{LP} 
% Remark 3.3). An improvement of the previous inequalities 
% for $k=2$ and for $q(X)\leq 2\ddim X-1$ was given by A. Causin and G. Pirola in \cite{CP} $$h^{0,2}(X)\geq \binom{q(X)}{2}.$$
Another generalization of the Castelnuovo-De Franchis inequality is provided by the work of G. Pareschi and M. Popa. 
% For the case of surfaces, the inequality is equivalent to the inequality $\chi (\omega_X)\geq q(X)-\ddim X$,
Theorem A in \cite{GSV} states that if $X$ is of maximal Albanese dimension and does not carry any higher irrational pencil then
$\chi (\omega_X)\geq q(X)-\ddim X.$ Their inequality is deduced using Generic Vanishing Theory for irregular varieties and
the Evans-Griffith Syzygy Theorem.

New techniques for the study of this problem  
were introduced recently by R. Lazarsfeld and M. Popa in \cite{LP}. Their approach 
relies on the study of a global version of the \emph{derivative complex associated to the structure sheaf} $\sO_X$ and
on the theory of vector bundles on projective spaces. Their inequalities mainly involve Hodge numbers
of type $h^{0,k}(X)$.

In this paper we extend the methods of \cite{LP} to the bundles of holomorphic $p$-forms $\Omega_X^{p}$. In this way we 
get inequalities for all the Hodge numbers of a special class of irregular compact K\"{a}hler manifolds. The main idea 
behind the inequalities of \cite{LP} and the ones of this paper goes as follows.
Via cup product any element $0\neq v\in H^1(X,\sO_X)$ defines a complex of vector spaces
\begin{eqnarray}\label{derivative}
0\longrightarrow H^0(X,\Omega_X^{p})\stackrel{\cup v}{\longrightarrow}H^1(X,\Omega_X^{p})
\stackrel{\cup v}{\longrightarrow}\ldots \stackrel{\cup v}{\longrightarrow} H^d(X,\Omega_X^{p})\longrightarrow 0.
\end{eqnarray}
Denoting by $\pp=\pp_{\rm sub}(H^1(X,\sO_X))$ the projective space of one dimensional linear subspaces of $H^1(X,\sO_X)$, 
we can arrange all of these, as $v$ varies, into a complex of locally free sheaves on $\pp$:
\begin{eqnarray}\label{BGG-intr}
0\longrightarrow \sO_{\pp}(-d)\otimes H^0(X,\Omega_X^{p})
\longrightarrow \sO_{\pp}(-d+1)\otimes H^1(X,\Omega_X^{p})\longrightarrow \ldots
\end{eqnarray}
$$\ldots \longrightarrow \sO_{\pp}(-1)\otimes H^{d-1}(X,\Omega_X^{p})\longrightarrow \sO_{\pp}\otimes H^d(X,\Omega_X^{p})\longrightarrow 0,$$
whose fiber at a point $[v]\in \pp$ is the complex \eqref{derivative}.
The complex (\ref{derivative}) is known as the \emph{derivative complex associated to the bundle} $\Omega_X^{p}$ \emph{with respect to the 
vector} $v$. It was introduced for the first time by
M. Green and R. Lazarsfeld in \cite{GL1} and \cite{GL2} in their study of Generic Vanishing Theorems for irregular compact K\"{a}hler manifolds. 
At this point it is important to study the exactness of the complex (\ref{BGG-intr}).
In \cite{LP} the case $p=0$ is analyzed. For this case we have that if $X$ does not carry any irregular fibrations, i.e. morphisms
$f:X\longrightarrow Y$ with connected fibers having the property that any smooth model of $Y$ is of maximal Albanese dimension, 
then the complex \eqref{BGG-intr} is everywhere exact except possibly at the last 
term and furthermore all the involved maps are of constant rank. 
In general, for $p>0$, in Proposition \ref{exact} we show that the exactness of the complex \eqref{BGG-intr} depends on 
the non-negative integer $m(X)$, the
least codimension of the zero-locus of a non-zero holomorphic one-form, i.e.
$$m(X)={\rm min}\{{\rm codim}\;Z(\omega)\;|\;0\neq \omega\in H^0(X,\Omega_X^1)\},$$ with the convention
$m(X)=\infty$ if every non-zero holomorphic $1$-form is everywhere non-vanishing. 
For instance, it was observed in \cite{GL1} Remark on p. 405 
and in \cite{LA} Proposition 6.3.10,   
examples of varieties 
with $m(X)=\ddim X$ are the smooth subvarieties of an abelian variety with ample normal bundle
(hence all smooth subvarieties of a simple abelian variety).
% This can be done since the derivative complex associated to the structure sheaf 
% can be identified with the Bernstein-Gel'fand-Gel'fand 
% correspondence which is related with the regularity of the cohomology module $\oplus_i H^i(X,\omega_X)$.
In particular we show that if $m(X)>p$ then the complex \eqref{BGG-intr} is exact at the first ($m(X)-p$)-steps from the left with
the first $m(X)-p$ maps being of constant rank. This is enough to ensure that the cokernel of the map
$$\sO_{\pp}(m(X)-d-p-1)\otimes H^{m(X)-p-1}(X,\Omega_X^{p})\longrightarrow \sO_{\pp}(m(X)-d-p)\otimes H^{m(X)-p}(X,\Omega_X^{p})$$
is a locally free sheaf. This in turn leads to inequalities for the Hodge numbers by the Evans-Griffith Theorem and by the fact that the 
Chern classes of a globally generated locally free sheaf are non-negative.

We turn to a more detailed presentation of our results. To simplify notation we only present the case $m(X)=\ddim X$ and we 
refer to Theorem \ref{schur} and Theorem \ref{schur2} for general statements where all possible values of $m(X)$ are considered. 
Fix an integer $1\leq p \leq d$ and for any $1\leq i\leq q-1$ define $\gamma_i(X,\Omega_X^{p})$ to be
the coefficient of $t^i$ in the formal power series:
$$\gamma(X,\Omega_X^{p};t)\stackrel{def}{=} \prod_{j=1}^{p}(1-jt)^{(-1)^j h^{p,d-p+j}}\in \zz[[t]],$$ 
where $h^{i,j}=h^{i,j}(X)$ are the Hodge numbers of $X$. 
% Moreover we also consider 
% the following partial holomorphic Euler characteristic of the bundle $\Omega_X^{d-p}$ starting from step $p$ up to $d$:
% $$\chi^{\geq p}(\Omega_X^{d-p})\stackrel{def}{=}\sum_{j=p}^{d} (-1)^{j+p} h^{d-p,j}.$$ Then we obtain the following 
\begin{theorem}\label{thm-intr2}
Let $X$ be a compact K\"{a}hler manifold of dimension $d$, irregularity $q\geq 1$ and with $m(X)=d$.  Then
\begin{enumerate} 
  \item Any Schur polynomial of weight $\leq q-1$ in the $\gamma_i(X,\Omega_X^{p})$ is non-negative.
In particular $$\gamma_i(X,\Omega_X^{p})\geq 0.$$\\
\item If $q>{\rm max}\;\{p,d-p-1\}$ then $$\sum_{j=d-p}^{d} (-1)^{d-p+j} h^{p,j}\geq q-p.$$
 \end{enumerate}
\end{theorem}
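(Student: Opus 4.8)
The proof will proceed by analyzing the complex \eqref{BGG-intr} under the hypothesis $m(X) = d$, which is the maximal possible value. First I would invoke Proposition \ref{exact}: since $m(X) = d > p$ for $1 \le p \le d-1$ (the case $p=d$ is degenerate and handled separately, as the complex then has only the term $\sO_\pp \otimes H^d(X,\Omega_X^d)$), the complex \eqref{BGG-intr} is exact at its first $d-p$ terms from the left, with those maps of constant rank. Truncating, this means the cokernel $\sF$ of the map
$$\sO_{\pp}(-p-1)\otimes H^{d-p-1}(X,\Omega_X^{p})\longrightarrow \sO_{\pp}(-p)\otimes H^{d-p}(X,\Omega_X^{p})$$
is a locally free sheaf on $\pp = \pp^{q-1}$, arising as the image of the remaining exact portion; in fact one gets a locally free resolution
$$0 \to \sO_\pp(-d)\otimes H^0(X,\Omega_X^p) \to \cdots \to \sO_\pp(-p)\otimes H^{d-p}(X,\Omega_X^p) \to \sF \to 0.$$
Moreover $\sF$ is globally generated, being a quotient of $\sO_\pp(-p)\otimes H^{d-p}(X,\Omega_X^p)$ twisted appropriately, or more precisely $\sF(p)$ is globally generated since it is a quotient of a direct sum of copies of $\sO_\pp$ and the earlier terms after twisting carry the required vanishing.

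The key computation is then to identify the total Chern class of $\sF$ (or of an appropriate twist). Using the locally free resolution above, $c(\sF) = \prod_{j=0}^{d-p} c(\sO_\pp(-j))^{(-1)^{d-p-j} h^{p,\,?}}$ — one has to bookkeep the indices carefully so that the exponent of $(1 + kH)$ matches $(-1)^{j}h^{p,d-p+j}$ after reindexing by $k = $ (twist), and the resulting rational function in the hyperplane class $H$, once the overall twist by $\sO_\pp(p)$ is absorbed, should be exactly the generating function $\gamma(X,\Omega_X^p; t)$ with $t \mapsto H$. This is the heart of the argument: matching the formal power series $\prod_{j=1}^p (1-jt)^{(-1)^j h^{p,d-p+j}}$ with the Chern polynomial of a globally generated bundle on projective space. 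Part (i) then follows immediately: for a globally generated vector bundle on $\pp^{q-1}$, every Schur polynomial in the Chern classes of weight $\le q-1$ is represented by an effective cycle, hence non-negative; in particular the Chern classes $c_i(\sF) = \gamma_i(X,\Omega_X^p)$ themselves are non-negative for $i \le q-1$. One must also check that $\gamma_i$ as defined via the generating series does coincide with $c_i$ of the relevant bundle — a routine expansion once the resolution is in place.

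For part (ii), the plan is to apply the Evans–Griffith Syzygy Theorem to the locally free resolution of $\sF$. Under the numerical hypothesis $q > \max\{p, d-p-1\}$, the resolution has length $d-p$ and all the free modules except the last are twists by strictly negative line bundles; the Evans–Griffith theorem (in the form used in \cite{LP}, \cite{GSV}) then forces a lower bound on the rank of $\sF$ in terms of the length of the resolution and the dimension $q-1$ of the ambient projective space, namely $\operatorname{rank} \sF \ge (\text{length}) = d-p$ unless $\sF$ splits off trivial summands, combined with the alternating-sum identity $\operatorname{rank}\sF = \sum_{j=0}^{d-p}(-1)^{d-p-j} h^{p,d-p+j} = \sum_{j=d-p}^{d}(-1)^{d-p+j} h^{p,j}$. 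Pinning down the exact constant $q - p$ (rather than $d-p$) will require care: one likely uses that the syzygy bundle's rank must exceed $q-1$ minus the codimension where it could split, or one runs the Evans–Griffith estimate together with the observation that the first map $\sO_\pp(-d)\otimes H^0 \hookrightarrow \sO_\pp(-d+1)\otimes H^1$ already has large cokernel rank; I expect this bookkeeping — reconciling the Evans–Griffith bound with the precise value $q-p$ and verifying the hypothesis $q > \max\{p, d-p-1\}$ is exactly what is needed to make the theorem applicable — to be the main obstacle, together with the degenerate boundary cases $p = d$ and small $q$.
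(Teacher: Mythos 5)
There is a genuine gap: you are working at the wrong end of the complex. With $m(X)=d>p$, Proposition \ref{exact}~(i) does give exactness at the first $d-p$ steps from the left, and your cokernel $\sF$ of $\sO_{\pp}(-p-1)\otimes H^{d-p-1}(X,\Omega_X^p)\to\sO_{\pp}(-p)\otimes H^{d-p}(X,\Omega_X^p)$ is locally free with $\sF(p)$ globally generated. But the resolution $0\to\sO_\pp(-d)\otimes H^0(X,\Omega_X^p)\to\cdots\to\sO_\pp(-p)\otimes H^{d-p}(X,\Omega_X^p)\to\sF\to 0$ involves only the Hodge numbers $h^{p,0},\dots,h^{p,d-p}$, so the Chern polynomial of $\sF(p)$ is $\prod_{j=1}^{d-p}(1-jt)^{(-1)^j h^{p,d-p-j}}$ --- the paper's $\delta(X,\Omega_X^p;t)$ --- and not $\gamma(X,\Omega_X^p;t)=\prod_{j=1}^{p}(1-jt)^{(-1)^j h^{p,d-p+j}}$, which is a polynomial in the \emph{upper} Hodge numbers $h^{p,d-p+1},\dots,h^{p,d}$. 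No reindexing will reconcile the two; the step you flag as ``the heart of the argument'' fails. Likewise in (ii) the identity $\operatorname{rank}\sF=\sum_{j=d-p}^{d}(-1)^{d-p+j}h^{p,j}$ is false for your $\sF$, whose rank is $\sum_{j=0}^{d-p}(-1)^{d-p-j}h^{p,j}$. What your construction actually proves is the $m=d$ case of parts (ii) of Theorems \ref{schur} and \ref{schur2} (the statements about the $\delta_i$ and $\chi^{\leq m-p}$), which is a true but different set of inequalities.

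The missing ingredient is Proposition \ref{exact}~(ii): via Serre duality the complex $\underline{\llll}_X^p$ read from the right is (up to duals and signs) the complex for $\Omega_X^{d-p}$ read from the left, so $m(X)=d>d-p$ forces exactness at the first $p$ steps from the \emph{right}. One then takes the kernel $G$ of $\sO_\pp(-p)\otimes H^{d-p}(X,\Omega_X^p)\to\sO_\pp(-p+1)\otimes H^{d-p+1}(X,\Omega_X^p)$; dualizing the exact sequence $0\to G\to\sO_\pp(-p)\otimes H^{d-p}\to\cdots\to\sO_\pp\otimes H^{d}\to 0$ and twisting shows that $G^{\vee}(p)$ is globally generated with Chern polynomial exactly $\gamma(X,\Omega_X^p;t)$ and with $\operatorname{rank}G=\sum_{j=d-p}^{d}(-1)^{d-p+j}h^{p,j}$. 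Part (i) is then the non-negativity of Schur polynomials of globally generated bundles, and part (ii) follows from Evans--Griffith applied to $G$, using Lemma \ref{lem} to rule out that $G$ is zero or a sum of line bundles (the hypothesis $q>\max\{p,d-p-1\}$ enters to guarantee $h^{p,d}\neq 0$ and to make these reductions work). Your left-end argument would succeed if run on $\Omega_X^{d-p}$ in place of $\Omega_X^p$, since $h^{d-p,p-j}=h^{p,d-p+j}$ --- but that substitution is precisely the Serre duality step you omitted.
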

% In this way for surfaces with $m(X)=2$ and irregularity $q>2$ we recover the well known inequalities true 
% for surfaces without irrational
% pencils (i.e. with no non-constant morphism to a curve of genus at least 2)
% $$h^{0,2}\geq 2q-1,\quad h^{1,1}\geq 2q-1,$$ the first one is the Castelnuovo-De Franchis inequality and the second can be found in
% \cite{BHPV} IV.5.4. (in fact the inequality $h^{0,2}\geq 2q-3$ holds even in the case $q=2$, see Theorem \ref{schur2} (iii)).
For instance, the $\gamma_1(X,\Omega_X^{p})$'s are non-negative linear polynomials in the variables $h^{p,j}$, and in the case of surfaces 
the inequalities above
become $h^{0,2}\geq 2q-3$ and $h^{1,1}\geq 2q-1$, well-known inequalities true for surfaces which do not admit any irregular pencils of genus 
$\geq 2$ (\emph{cf.} \cite{BHPV} IV.5.4).
% well-known inequalities which hold when the surface does not carry any irrational pencils 
% of genus $\geq 2$ (i.e. no surjective morphisms with connected fibers onto a smooth curve of genus $\geq 2$, cf. 
% \cite{BHPV} IV.5.4).
In higher dimension the polynomials $\gamma_i(X,\Omega_X^{p})$, of degree $i$ in the variables $h^{p,j}$,
give new inequalities involving Hodge numbers of $X$. In addition to the methods for $\sO_X$, for 
$p>0$ Serre Duality offers a way to see until which step the complex \eqref{BGG-intr} is exact 
\emph{counting from the right}. 
This trick leads to further inequalities for the Hodge numbers and, in the special case when $m(X)=\ddim X$ and $q(X)>\ddim X\geq 2$, 
also to get a bound on the Euler characteristic for the bundles $\Omega_X^p$:
$$|\chi (\Omega_X^1)|\geq 2 \quad \mbox{ and } \quad |\chi(\Omega_X^p)|\geq 1\quad \mbox{ for }\quad p=2,\ldots, \ddim X-2$$ 
(\emph{cf.} Corollary \ref{cor}).
In section IV we list 
the inequalities coming from Theorem \ref{thm-intr2} for manifolds of dimension three, four and five. Finally 
for threefolds and fourfolds with $m(X)=\ddim X$
we are able to give asymptotic bounds for all the Hodge numbers in terms of the irregularity $q$.
In the case of threefolds we obtain
$$h^{0,2}\succeq 4q,\quad h^{0,3}\succeq 4q, \quad h^{1,1}\succeq 2q+\sqrt{2q},\quad h^{1,2}\succeq 5q+\sqrt{2q}$$
and for the case of fourfolds we get
$$h^{0,2}\succeq 4q, \quad h^{0,3}\succeq 5q+\sqrt{2q},\quad h^{0,4}\succeq 4q$$
$$h^{1,1}\succeq 2q,\quad h^{1,2}\succeq 8q+2\sqrt{2q},\quad h^{1,3}\succeq 12q+3\sqrt{2q},\quad h^{2,2}\succeq 8q+4\sqrt{2q}.$$
Asymptotic inequalities for Hodge numbers of type $h^{0,j}$ were already established in \cite{LP} for manifolds which do not carry any irregular
fibrations. 
% In fact under this point of view manifolds with $m(X)=\dim (X)$ and manifolds carrying no irregular fibrations behave likely and 
% so far we do not know if there is any relation between them.

Setting $E=\bigwedge ^*H^1(X,\sO_X)$ for the graded exterior algebra over $H^1(X,\sO_X)$, in 
the last section, we use the Bernstein-Gel'fand-Gel'fand (BGG) correspondence and Generic Vanishing Theorems for 
bundles of holomorphic $p$-forms to bound the \emph{regularity} of the $E$-modules
$\bigoplus_i H^i(X,\Omega_X^{p})$ for any value of $p$. 
% This can be done because 
% the complex \eqref{BGG-intr} can be identified, as in \cite{LP}, with a 
% BGG complex associated to the $E$-module $\bigoplus_i H^i(X,\Omega_X^{p})$. 
We refer to Section V for 
the definition of regularity for finitely generated graded modules over an exterior algebra and for references about the BGG correspondence 
and the Generic Vanishing Theorems used.
The case $p=\ddim X$ has been studied in \cite{LP} Theorem B. If we denote by $k$ for the dimension of the 
general fiber of the Albanese map ${\rm alb}_X:X\longrightarrow {\rm Alb}(X)$, then the $E$-module
$\bigoplus_i H^i(X,\omega_X)$ is $k$-regular but not $(k-1)$-regular. In general, for all the others values of $p$, we are not able to determine 
the regularity of the $E$-module $\bigoplus_i H^i(X,\Omega_X^p)$ but only to give a bound in terms of the minimal and 
maximal dimensions of the fibers of 
the Albanese map ${\rm alb}_X:X\longrightarrow {\rm Alb}(X)$. 
\begin{theorem}\label{thm-intr1}
Let $X$ be a compact K\"{a}hler manifold of dimension $d$ and irregularity $q\geq 1$. 
Let $k$ be the dimension of the general fiber of ${\rm alb}_X:X\longrightarrow {\rm Alb}(X)$ and 
$f$ be the maximal dimension of a fiber of ${\rm alb}_X$. Let $0\leq p\leq d$ be an integer and set 
$l={\rm max}\{k, f-1\}$. 
If $p>l$ then the $E$-module $\bigoplus_i H^i(X,\Omega_X^p)$ is $(d-p+l)$-regular.\\
\end{theorem}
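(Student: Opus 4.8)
The plan is to deduce the regularity bound from the exactness properties of the BGG complex \eqref{BGG-intr} via the dictionary between linear complexes of sheaves on $\pp$ and modules over the exterior algebra $E$. Recall that a finitely generated graded $E$-module $M$ is $m$-regular precisely when, under the BGG correspondence, the associated linear complex $\mathbf{L}(M)$ of sheaves on $\pp = \pp_{\rm sub}(H^1(X,\sO_X))$ is a resolution (i.e. exact) from the appropriate spot onwards — concretely, $m$-regularity of $M = \bigoplus_i H^i(X,\Omega_X^p)$ is controlled by the exactness of \eqref{BGG-intr} at the terms $\sO_{\pp}(-d+j)\otimes H^j(X,\Omega_X^p)$ for $j$ ranging over an interval determined by $m$, together with the vanishing of certain sheaf cohomology. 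So the first step is to set up this translation carefully, identifying which segment of \eqref{BGG-intr} being exact corresponds to $(d-p+l)$-regularity.

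Second, I would invoke Proposition \ref{exact}: since the hypothesis is $p > l = \max\{k, f-1\}$, and one should check that $l$ is exactly the quantity that plays the role of $d - m(X)$ in the relevant direction (the general fiber dimension $k$ governing the generic vanishing / left-exactness, and $f-1$ the maximal fiber dimension governing the right-exactness via Serre duality as indicated in the discussion after Theorem \ref{thm-intr2}), we get that the complex \eqref{BGG-intr} for $\Omega_X^p$ is exact on a segment of length roughly $p - l$ steps. More precisely, combining the left-exactness coming from $m(X)$-type considerations with the Serre-duality trick described in the excerpt (which gives exactness counting from the right for $p>0$), one obtains that \eqref{BGG-intr} is a resolution past the term in cohomological degree $d - p + l$. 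The third step is then purely formal: feed this exactness statement back through the BGG correspondence to conclude that $\bigoplus_i H^i(X,\Omega_X^p)$ is $(d-p+l)$-regular. For the $p = d$ case one should recover the statement of \cite{LP} Theorem B (with $l = \max\{0, f-1\} = f-1$ when $\dim X > 0$), which serves as a consistency check.

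The main obstacle I anticipate is pinning down the precise numerical bookkeeping: one must verify that $l = \max\{k, f-1\}$ is exactly the threshold, i.e. that both the Generic Vanishing input (which bounds from below the range where the cup-product maps $H^i(X,\Omega_X^p) \xrightarrow{\cup v} H^{i+1}(X,\Omega_X^p)$ are surjective/injective, in terms of the general fiber dimension $k$) and the Serre-duality-dualized version (which contributes $f - 1$ via the maximal fiber dimension of ${\rm alb}_X$) combine to give exactness of \eqref{BGG-intr} on the nose over the claimed range, with no off-by-one error. A secondary technical point is ensuring the generic vanishing theorems for bundles of holomorphic $p$-forms (as opposed to just $\omega_X$) are available in the form needed; these should be the Green–Lazarsfeld generic vanishing results of \cite{GL1}, \cite{GL2} applied to $\Omega_X^p$, and I would cite the version governing the loci $\{L \in {\rm Pic}^0(X) : H^i(X, \Omega_X^p \otimes L) \neq 0\}$ and their codimensions in terms of the Albanese fiber dimensions. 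Once the range of exactness is correctly identified, the conclusion is immediate from the BGG dictionary recalled in Section V.
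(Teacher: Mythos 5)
Your overall architecture --- reduce $(d-p+l)$-regularity of $\bigoplus_i H^i(X,\Omega_X^p)$ via the BGG correspondence to exactness of the linear complex attached to the dual module $P_X^p=\bigoplus_i H^i(X,\Omega_X^{d-p})$, and then quote an exactness statement for that complex --- is the same as the paper's, and your consistency check against \cite{LP} Theorem B is apt. The gap is in the middle step: you propose to extract the exactness from Proposition \ref{exact}, i.e.\ from the invariant $m(X)$, and then to ``check that $l$ is exactly the quantity that plays the role of $d-m(X)$.'' That check cannot be carried out, because $m(X)$ and $l=\max\{k,f-1\}$ are independent invariants. For instance, for $X=C\times E$ with $C$ of genus $2$ and $E$ elliptic, the Albanese map is an embedding, so $k=f=0$ and $l=0$, while the pullback of a $1$-form from $C$ vanishes along a divisor $\{pt\}\times E$, so $m(X)=1$. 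The theorem requires $\llll_X^{d-p}$ to be exact at the first $p-l$ steps from the left, whereas Proposition \ref{exact} only yields the first $m(X)-(d-p)$ steps; these agree or better only when $m(X)\geq d-l$, which fails in the example ($1<2$). Your proposed split of the two contributions (left-exactness governed by $k$, right-exactness governed by $f-1$ via Serre duality) is also not how the invariants enter: both are absorbed into the single number $l$ on one side of the complex.

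What is actually needed is a separate exactness statement phrased directly in terms of $k$ and $f$; this is Proposition \ref{alb}(i), applied to $\Omega_X^{d-p}$. Its proof does not go through zero loci of $1$-forms at all: one passes to the analytic derivative complex $\sK_{d-p}^{\rm an}$ on the affine space underlying $H^1(X,\sO_X)$, reduces by homogeneity to the stalk at the origin, identifies $\sH^i(\sK_{d-p}^{\rm an})_0$ with $R^ip_{2*}(p_1^{*}\Omega_X^{d-p}\otimes \sP)_0$ using \cite{CH} Theorem 6.2, and then kills these for $i<p-l$ by the generic vanishing theorems of \cite{GV} (Theorems 5.11(1) and 3.7). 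You do gesture at ``generic vanishing for $\Omega_X^p$ in terms of the Albanese fiber dimensions'' in your final paragraph, but you frame it as a citation detail and as codimension bounds on the loci $V^i(\Omega_X^p)$ in the style of \cite{GL1}; the proof needs the stronger statement about higher direct images of the Poincar\'{e} bundle, which is precisely where $\max\{k,f-1\}$ enters. So the BGG bookkeeping (dual module, \cite{LP} Lemma 2.3 and Proposition 2.2) is right, but the source of the exactness is misidentified, and as written the argument does not close.
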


\textbf{Acknowledgements.} I am very grateful to Professor M. Popa for drawing my attention to this problem and for many helpful discussions. 
I also want to thank Professors L. Ein, R. Lazarsfeld and C. Schnell for useful conversations.

\section{Exactness of the complex $\underline{\llll}_X^p$}
Let $X$ be a compact K\"{a}hler manifold of dimension $d$. The \emph{irregularity} of $X$ is the non-negative integer 
$q(X)\stackrel{def}{=}h^1(X,\sO_X)$. The manifold $X$ is said to be \emph{irregular} if $q(X)>0$.
We aim to study the exactness of the complex \eqref{BGG-intr} in terms of the non-negative integer 
$$m=m(X):={\rm min} \{\codim Z(\omega)\; | \; 0\neq \omega\in H^0(X,\Omega_X^1)\},$$
with the convention
$m(X)=\infty$ if every non-zero holomorphic $1$-form is nowhere vanishing. Refer also to Proposition \ref{alb} for another study of the 
exactness in terms of different invariants.
\begin{prop}\label{exact}
Let $X$ be an irregular compact K\"{a}hler manifold of dimension $d$ and $0\leq p\leq d$ be an integer.
\begin{enumerate}
\item If $p<m\leq d$ then the complex $\underline{\mathbf{L}}_X^p$ is exact at the first $m-p$ steps from the left, and the 
first $m-p$ maps are of constant rank.\\
\item If $d-p<m\leq d$ then the complex $\underline{\mathbf{L}}_X^p$ is exact at the first $m-d+p$ steps from the right, and the 
last $m-d+p$ maps are of constant rank.\\
\item If $m=\infty$ then the whole complex $\underline{\mathbf{L}}_X^p$ is exact and all the involved maps are of constant rank.\\
\end{enumerate}
\end{prop}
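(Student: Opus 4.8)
The plan is to reduce the statement, which concerns the complex of locally free sheaves $\underline{\mathbf{L}}_X^p$ on $\pp$, to a fibrewise assertion on the derivative complexes \eqref{derivative}, and then to feed in the Green--Lazarsfeld computation of the hypercohomology of the twisted de Rham complex $(\Omega_X^\bullet,\wedge\omega)$. Recall that the fibre of $\underline{\mathbf{L}}_X^p$ at $[v]\in\pp$ is the complex \eqref{derivative}. For a bounded complex of vector bundles $0\to C^0\to C^1\to\cdots$ starting in degree $0$, if for every point the fibre complex is exact at the terms $C^0,\dots,C^{k-1}$, then a cascade argument forces the maps $C^0\to C^1,\dots,C^{k-1}\to C^k$ to have locally constant rank: fibrewise injectivity of $C^0\to C^1$ pins down its rank, exactness at $C^1$ then pins down the rank of $C^1\to C^2$, and so on. Consequently their kernels and images are subbundles and the sheaf complex is exact at $C^0,\dots,C^{k-1}$ with those maps of constant rank; likewise if the fibre complexes are everywhere exact. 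So it suffices to prove that, for every $0\neq v\in H^1(X,\sO_X)$, the complex \eqref{derivative} for the given $p$ is exact at its first $m-p$ terms, for (i), and everywhere, for (iii).

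Fix $0\neq v\in H^1(X,\sO_X)$ and let $\omega\in H^0(X,\Omega_X^1)$ be the holomorphic $1$-form with $v=\overline\omega$ under the conjugate-linear Hodge isomorphism $H^1(X,\sO_X)\cong\overline{H^0(X,\Omega_X^1)}$. Conjugating \eqref{derivative}, using $\overline{H^k(X,\Omega_X^p)}=H^p(X,\Omega_X^k)$ and the fact that $\cup v$ becomes the morphism on $H^p(X,-)$ induced by the sheaf map $\wedge\omega\colon\Omega_X^k\to\Omega_X^{k+1}$, identifies \eqref{derivative} with the complex
\[
0\to H^p(X,\sO_X)\stackrel{\wedge\omega}{\longrightarrow}H^p(X,\Omega_X^1)\stackrel{\wedge\omega}{\longrightarrow}\cdots\stackrel{\wedge\omega}{\longrightarrow}H^p(X,\Omega_X^d)\to 0,
\]
which is precisely the row $t=p$ of the spectral sequence $E_1^{s,t}=H^t(X,\Omega_X^s)\Rightarrow\mathbb{H}^{s+t}\big(X,(\Omega_X^\bullet,\wedge\omega)\big)$, whose $d_1$ is induced by $\wedge\omega$. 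Thus \eqref{derivative} is exact at its $k$-th term if and only if $E_2^{k,p}=0$. Since $X$ is smooth, hence Cohen--Macaulay, $(\Omega_X^\bullet,\wedge\omega)$ is, up to a twist by $\omega_X$, the Koszul complex of $\omega$ regarded as a cosection $T_X\to\sO_X$, so the cohomology sheaves $\mathcal{H}^i(\Omega_X^\bullet,\wedge\omega)$ are supported on $Z(\omega)$ and vanish for $i<\codim Z(\omega)$; the analysis of this complex due to Green--Lazarsfeld (see \cite{GL1}, and \cite{LP} for the present formulation) upgrades this to the degeneration statement $E_2^{s,t}=0$ for all $s+t<\codim Z(\omega)$, with the convention $\codim Z(\omega)=\infty$ when $\omega$ is nowhere vanishing. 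Since $m=m(X)\le\codim Z(\omega)$ for every nonzero $\omega$, in case $p<m$ and $0\le k\le m-p-1$ one has $k+p\le m-1<\codim Z(\omega)$, hence $E_2^{k,p}=0$; together with the first paragraph this proves (i). Case (iii) is $Z(\omega)=\emptyset$ for all nonzero $\omega$: then every $E_2^{s,t}$ vanishes, each complex \eqref{derivative} is everywhere exact, and the first paragraph gives the conclusion.

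For (ii) I would use Serre duality. The isomorphisms $H^k(X,\Omega_X^p)^\vee\cong H^{d-k}(X,\Omega_X^{d-p})$ and the (anti)symmetry of cup product identify the $\sO_\pp$-linear dual of $\underline{\mathbf{L}}_X^p$, read in reverse order, with $\underline{\mathbf{L}}_X^{d-p}\otimes\sO_\pp(d)$. Since dualizing a complex of vector bundles with constant-rank differentials again produces such a complex, with exactness preserved and the order reversed, part (ii) for $\Omega_X^p$ follows from part (i) applied to $\Omega_X^{d-p}$: when $d-p<m\le d$, part (i) gives that $\underline{\mathbf{L}}_X^{d-p}$ is exact, with constant-rank maps, at its first $m-(d-p)=m-d+p$ terms from the left, and hence $\underline{\mathbf{L}}_X^p$ is exact, with constant-rank maps, at its first $m-d+p$ terms from the right.

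I expect the only substantial point to be the degeneration $E_2^{s,t}=0$ for $s+t<\codim Z(\omega)$: the passage from the sheaf-level vanishing $\mathcal{H}^i(\Omega_X^\bullet,\wedge\omega)=0$, $i<\codim Z(\omega)$, to this spectral-sequence statement is not formal, but it is exactly the content of the Green--Lazarsfeld generic vanishing argument, which I would quote rather than reprove. The remaining ingredients are routine bookkeeping with complex conjugation, Serre duality, and constant-rank linear algebra.
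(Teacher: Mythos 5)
Your proposal is correct and follows essentially the same route as the paper: identify the fibres of $\underline{\mathbf{L}}_X^p$ via conjugation with the $\wedge\omega$ complex on $H^p(X,\Omega_X^\bullet)$, quote the Green--Lazarsfeld exactness result (your $E_2$-vanishing formulation is equivalent to the row-exactness statement the paper cites from \cite{GL1}, Proposition 3.4), and obtain (ii) by Serre duality. Your fibrewise constant-rank cascade and the sheaf-level identification of the dual of $\underline{\mathbf{L}}_X^p$ with $\underline{\mathbf{L}}_X^{d-p}\otimes\sO_{\pp}(d)$ are just more explicit versions of the paper's ``exactness can be checked on fibres'' remark and its fibrewise Serre-duality diagram.
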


\begin{proof}
Under the Hodge conjugate-linear isomorphism $H^i(X,\Omega_X^{j})\cong H^j(X,\Omega_X^i)$, the fiber at a point $[v]\in \pp$ of the
complex $\underline{\llll}_X^p$ is identified with the complex
\begin{equation}\label{fiber}
0\longrightarrow H^p(X,\sO_X)\stackrel{\wedge \omega}{\longrightarrow}H^p(X,\Omega_X^1)\stackrel{\wedge \omega}{\longrightarrow}\ldots 
\stackrel{\wedge \omega}{\longrightarrow}
H^p(X,\Omega_X^{d})\longrightarrow 0,
\end{equation}
where $\omega\in H^0(X,\Omega_X^1)$ is the holomorphic $1$-form conjugate to $v\in H^1(X,\sO_X)$.
For every non-zero holomorphic one-form $\omega$ the complex \eqref{fiber} is exact 
at the first $m-p$ steps from the left by \cite{GL1} Proposition 3.4. 
Hence the complex $\underline{\llll}_X^p$ is itself exact since exactness can be checked at the level of fibers. 
This also shows that the first $m-p$ maps of the complex $\underline{\llll}_X^p$ are of constant
rank.

For point (ii), using Serre Duality and thinking of the spaces $H^p(X,\Omega_X^q)$ as the $(p,q)$-Dolbeault cohomology, 
we have a diagram
$$
\begin{array}[c]{ccccccccc}

\ldots & \stackrel{\wedge \omega}{\longrightarrow} & H^{d-p}(X,\Omega_X^{i-1})& \stackrel{\wedge \omega}{\longrightarrow} & H^{d-p}(X,\Omega_X^i)  & \stackrel{\wedge \omega}{\longrightarrow} & H^{d-p}(X,\Omega_X^{i+1}) & \stackrel{\wedge \omega}{\longrightarrow} &\ldots\\
       &			     & \downarrow           &                             &  	\downarrow     &                             &     \downarrow         &                             &      \\    		
\ldots & \longrightarrow                 & H^p(X,\Omega_X^{d-i+1})^{\vee} & \longrightarrow    & H^p(X,\Omega_X^{d-i})^{\vee} & \longrightarrow   & H^p(X,\Omega_X^{d-i-i})^{\vee} & \longrightarrow    &  \ldots

\end{array}
$$
where the bottom complex is the dual complex of \eqref{fiber}. 
This diagram commutes up to sign and hence, if $m>d-p$, 
the upper complex (and therefore also the bottom one) is exact at the 
first $m-d+p$ steps from the left. Finally dualizing again the bottom complex we have that \eqref{fiber} is exact at the first $m-d+p$ steps 
from the right. 

The case $m=\infty$ follows as the complexes \eqref{fiber} are now everywhere exact.
\end{proof}

In the special case when the zero-set of every non-zero holomorphic $1$-form consists of a finite number of 
points, i.e. when $m(X)=\ddim X$, Proposition \ref{exact} implies that the complex $\underline{\mathbf{L}}_X^p$
is everywhere exact except at most at one step. This allows us to give a bound on the Euler characteristic of the bundles of 
holomorphic $p$-forms in the case $q(X)>\ddim X$. We recall from the introduction that examples of manifolds with $m(X)=\ddim X$ are 
provided by smooth subvarieties of an 
abelian varieties having ample normal bundle.
Before stating the bounds, we prove a simple Lemma which will be useful in the sequel.

\begin{lemma}\label{lem}
Let $e\geq 2$, $t\geq 1$, $q\geq 1$ and $a$ be integers. For $i=1,\ldots ,e+1$ and $s=1,\ldots ,t$ let $V_i$ and $Z_s$ 
be complex vector spaces of positive dimension. 
\begin{enumerate}
\item If a complex of locally free sheaves on 
$\pp=\pp^{q-1}$ of length $e+1$ of the form
\begin{equation}\label{lemseq}
0\longrightarrow V_{e+1}\otimes \sO_{\pp}(-a)\longrightarrow V_e\otimes \sO_{\pp}(-a+1)\longrightarrow \ldots \longrightarrow V_1\otimes \sO_{\pp}(-a+e)\longrightarrow 0
\end{equation}
is exact, then $q\leq e$.\\
\item Let $k_s\geq -a+e$ be integers. If a complex of locally free sheaves on $\pp=\pp^{q-1}$ of length $e+2$ of the form
\begin{equation*}\label{lemseq2}
0\longrightarrow V_{e+1}\otimes \sO_{\pp}(-a)\longrightarrow V_e\otimes \sO_{\pp}(-a+1)\longrightarrow \ldots \longrightarrow V_1\otimes \sO_{\pp}(-a+e)\longrightarrow
\bigoplus_{s=1}^t (Z_s\otimes \sO_{\pp}(k_s))\longrightarrow 0
\end{equation*}
is exact, then $q\leq e+1$.
\end{enumerate}
\end{lemma}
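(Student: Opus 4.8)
The strategy is to count—via an exact complex of sheaves, the alternating sum of ranks and of any additive/multiplicative invariant is forced to vanish, and for line bundles on $\pp^{q-1}$ the relevant invariant is the Hilbert polynomial (equivalently, the Chern character). For part (i), I would restrict attention to the cohomology of the twisted structure sheaves. An exact complex of locally free sheaves remains exact after tensoring by $\sO_\pp(a)$, so \eqref{lemseq} becomes an exact complex whose terms are $V_i\otimes\sO_\pp(i-1)$ for $i=1,\dots,e+1$. Now break this long exact complex into short exact sequences and chase cohomology: since $H^j(\pp^{q-1},\sO_\pp(i-1))=0$ for $0<i-1<q-1$ in intermediate degrees, while $H^0$ of a negative twist vanishes and $H^{q-1}$ of $\sO_\pp$ and small positive twists vanish, I would show that if $q\geq e+1$ the complex cannot be exact. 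Concretely: suppose $q>e$, i.e. $q-1\geq e$. Then all the twists $i-1$ for $i=1,\dots,e+1$ lie in the range $[0,e]\subseteq[0,q-1]$, and one computes $H^0$ of the leftmost term $V_{e+1}\otimes\sO_\pp(0)$… wait—better: after the shift the leftmost term is $V_{e+1}\otimes\sO_\pp(0)$? No: re-indexing, tensor \eqref{lemseq} by $\sO_\pp(a-e)$ so the rightmost term is $V_1\otimes\sO_\pp$; then the $i$-th term is $V_i\otimes\sO_\pp(-i+1)$. The leftmost term $V_{e+1}\otimes\sO_\pp(-e)$ injects into $V_e\otimes\sO_\pp(-e+1)$, and taking global sections, $H^0(\sO_\pp(-e))=0$ while we need the kernel to be $0$—fine, but to derive a contradiction I push the vanishing through: the complex being exact means its hypercohomology vanishes, hence the alternating sum $\sum_i (-1)^i \dim H^\bullet(V_i\otimes\sO_\pp(-i+1))=0$ in each total degree, and a direct check using that only $H^0$ and $H^{q-1}$ of these particular twists can be nonzero pins down that exactness fails once $q-1\geq e$. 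The cleanest packaging: an exact complex of length $e+1$ of vector bundles on a variety of dimension $q-1$ forces $q-1\leq e$ by a standard "depth/length" argument (e.g. localize at the generic point, or use that a bounded exact complex of locally frees on an affine chart splits, but on $\pp^{q-1}$ the Euler sequence obstruction via Chern classes gives it), equivalently by the acyclicity lemma of Peskine–Szpiro applied fiberwise is not quite it—so I will instead run the explicit cohomology chase above.

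For part (ii), the complex now has length $e+2$, with the extra term $\bigoplus_{s=1}^t Z_s\otimes\sO_\pp(k_s)$ on the right and the hypothesis $k_s\geq -a+e$ (so after twisting by $\sO_\pp(a-e)$ these become $\sO_\pp(k_s-e+a+\text{shift})$ with nonnegative exponent, keeping the right end in the "$H^0$ survives, higher cohomology of the left twists vanishes" regime). I would split off the short exact sequence at the right end: letting $\sK$ be the image (= kernel) of the map $V_1\otimes\sO_\pp(-a+e)\to\bigoplus_s Z_s\otimes\sO_\pp(k_s)$, the truncated complex $0\to V_{e+1}\otimes\sO_\pp(-a)\to\cdots\to V_1\otimes\sO_\pp(-a+e)\to\sK\to 0$ is exact of length $e+2$ with a (coherent, in fact locally free on the locus where ranks are constant—but we only need the complex exact) last term $\sK$. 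Then the same cohomological counting as in (i), now with one more slot available, yields $q-1\leq e+1$, i.e. $q\leq e+1$. Alternatively, and perhaps more transparently, I would argue by contradiction: if $q\geq e+2$, then $q-1\geq e+1$ and all the twists $-a, -a+1,\dots,-a+e$ of the "linear" part lie in an interval of length $e\leq q-2<q-1$, so after the appropriate global twist these bundles have vanishing cohomology in all degrees $1,\dots,q-2$ and one of $H^0,H^{q-1}$; feeding the long exact cohomology sequence from the left, the vanishing propagates and collides with the nonvanishing of $H^0$ of the rightmost term $\bigoplus_s Z_s\otimes\sO_\pp(k_s)$ (which is nonzero because each $Z_s\neq 0$ and $k_s\geq -a+e\geq$ a bound making the twist globally generated, so $H^0\neq 0$), a contradiction.

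The main obstacle is bookkeeping the cohomology vanishing ranges on $\pp^{q-1}$ so that the chase actually closes up: one must be careful that the window of twists $\{-a,-a+1,\dots,-a+e\}$ (and the $k_s$'s) interacts correctly with the Bott vanishing $H^i(\pp^{q-1},\sO(j))=0$ for $0<i<q-1$, ensuring no unwanted intermediate cohomology appears to spoil the alternating-sum count. In part (ii) the extra subtlety is that the rightmost term is a direct sum of line bundles of possibly different degrees $k_s$, so I must invoke the hypothesis $k_s\geq -a+e$ uniformly to guarantee $H^0(\bigoplus_s Z_s\otimes\sO_\pp(k_s))\neq 0$ and that this term contributes no higher cohomology in the critical degree; once that is in hand the induction/splicing is routine. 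I expect the write-up to proceed: (1) twist to normalize, (2) split into short exact sequences, (3) apply Bott vanishing degree by degree, (4) read off the inequality on $q$.
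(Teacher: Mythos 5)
Your proof of part (i), once the false starts are discarded, is exactly the paper's argument: twist so the right-hand term is $V_1\otimes \sO_{\pp}$, break the complex into short exact sequences, and chase cohomology from the right using $H^0(\pp,V_1\otimes\sO_{\pp})\neq 0$ together with the fact that $\sO_{\pp^{q-1}}(-c)$ has no cohomology at all for $1\leq c\leq q-1$; assuming $q>e$ this propagates to $H^{e-1}(\pp,V_{e+1}\otimes\sO_{\pp}(-e))\neq 0$, impossible since $e-1<q-1$. That part is correct.

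Part (ii), however, has a genuine gap in the boundary case where \emph{every} $k_s$ equals $-a+e$. Your claimed ``collision'' needs two things at once: that the vanishing propagated from the left actually reach the last term, and that the last term retain a nonzero $H^0$ after the twist you chose. These are incompatible. If you normalize so the last linear term is $V_1\otimes\sO_{\pp}$, the propagation only shows that $I=\mathrm{im}\bigl(V_2\otimes\sO_{\pp}(-1)\to V_1\otimes\sO_{\pp}\bigr)$ has no cohomology, and the sequence $0\to I\to V_1\otimes\sO_{\pp}\to\bigoplus_s Z_s\otimes\sO_{\pp}(k_s+a-e)\to 0$ then gives an \emph{isomorphism} $H^0(V_1\otimes\sO_{\pp})\cong H^0\bigl(\bigoplus_s Z_s\otimes\sO_{\pp}(k_s+a-e)\bigr)$ --- both sides nonzero, no contradiction. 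If instead you twist once more by $\sO_{\pp}(-1)$ so that the entire linear part is cohomology-free (this uses $q\geq e+2$), the propagation does reach the last term and kills all its cohomology; this contradicts $H^0\neq 0$ whenever some $k_s>-a+e$, but when all $k_s=-a+e$ the last term becomes $\bigoplus_s Z_s\otimes\sO_{\pp}(-1)$, which genuinely has no cohomology, and nothing is proved. You need an extra idea to close this case: either observe that when all $k_s=-a+e$ the last map is given by a constant matrix $V_1\to\bigoplus_s Z_s$, so its kernel is a trivial bundle twisted by $\sO_{\pp}(-a+e)$ and the complex reduces to one of type (i), giving $q\leq e$; or, more uniformly, dualize the whole complex so that the term $\bigoplus_s Z_s^{\vee}\otimes\sO_{\pp}(-k_s)$ sits at the left end with the most negative twists, and run the chase of part (i) verbatim from the right to land at $H^{e}(\pp,\sO_{\pp}(-k_s-a))\neq 0$, forcing $e\geq q-1$. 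The paper's own proof of (ii) is only the phrase ``the same argument,'' but your write-up commits to a specific contradiction that does not occur in the boundary case, so the step as stated fails.
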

\begin{proof}
If $q=1$ then clearly $q\leq e$, therefore we can assume $q>1$. If $e=2$ then $q=2$, since line bundles on projective spaces have no 
intermediate cohomology and so we can suppose $e>2$. After having twisted the complex \eqref{lemseq} by $\sO_{\pp}(-e+a)$ we get the complex
$$0\longrightarrow V_{e+1}\otimes \sO_{\pp}(-e)\stackrel{f_1}{\longrightarrow} V_e\otimes \sO_{\pp}(-e+1)\longrightarrow\ldots$$ $$\ldots  
\longrightarrow V_4\otimes \sO_{\pp}(-3)\stackrel{f_{e-2}}{\longrightarrow}V_3\otimes \sO_{\pp}(-2)\longrightarrow V_2\otimes \sO_{\pp}(-1)
\longrightarrow V_1\otimes \sO_{\pp}\longrightarrow 0.$$
 Set $W_j={\rm coker}\;f_j$ for $j=1,\ldots,e-2$. If 
$q>e$, we would have that 
$H^{e-1-j}(\pp,W_j)\neq 0$ for every $j=1,\ldots ,e-2$ and hence that $H^{e-1}(\pp,V_{e+1}\otimes \sO_{\pp}(-e))\neq 0$. This 
yields a contradiction and then $q\leq e$.
To prove (ii) we can use the same argument used to prove (i).
% (ii). If $\ddim Z_s=0$ for all $s=1,\ldots ,t$, then we have an exact complex of length $\leq e+1$ and hence by point (i) we have $q\leq e$.
% Hence we can assume $\ddim Z_s>0$ for some index $s$, but also this case follows easily arguing as in point (i). 
\end{proof}

\begin{cor}\label{cor}
Let $X$ be a compact K\"{a}hler manifold of dimension $d\geq 2$ and irregularity $q(X)>d$. If $m(X)=d$ 
then  
$$(-1)^{d-1}\chi(\Omega_X^1)\geq 2,$$ and
$$(-1)^{d-p}\chi(\Omega_X^p)\geq 1$$ for any $p=2,\ldots ,d-2$.
\end{cor}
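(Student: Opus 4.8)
The plan is to run, for every $1\le p\le d-1$, the derivative-complex machinery behind Theorem \ref{thm-intr2}. Since $m(X)=d$, Proposition \ref{exact}(i) (applicable as $p<d$) together with Proposition \ref{exact}(ii) (applicable as $d-p<d$) show that $\underline{\mathbf{L}}_X^p$ is exact in every homological position except position $d-p$, i.e.\ except at the term $\sO_\pp(-p)\otimes H^{d-p}(X,\Omega_X^p)$, and that every one of its maps has constant rank. Writing $T_i=\sO_\pp(i-d)\otimes H^i(X,\Omega_X^p)$, the unique cohomology sheaf $\mathcal{H}:=\mathcal{H}^{d-p}(\underline{\mathbf{L}}_X^p)$ is then a quotient of two subbundles of $T_{d-p}$, hence locally free on $\pp=\pp^{q-1}$, and a rank count (the alternating sum of the ranks of the $T_i$ equals $(-1)^{d-p}\operatorname{rank}\mathcal{H}$) gives
$$\operatorname{rank}\mathcal{H}=(-1)^{d-p}\chi(\Omega_X^p).$$
So the Corollary is equivalent to: $\operatorname{rank}\mathcal{H}\ge 1$ for all $1\le p\le d-1$, and $\operatorname{rank}\mathcal{H}\ge 2$ when $p=1$. (Throughout I use that $h^{p,i}(X)>0$ for all $0\le p,i\le d$ when $q>d$: by Hodge symmetry and Serre duality one reduces to $p\le i$, $p+i\le d$, and then cup product with the $p$-th power of a K\"{a}hler class embeds $H^{i-p}(X,\sO_X)$ into $H^i(X,\Omega_X^p)$, while $\dim H^{i-p}(X,\sO_X)=h^0(X,\Omega_X^{i-p})\ge\binom{q}{i-p}>0$.)

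The bound $\operatorname{rank}\mathcal{H}\ge 1$ is immediate: if $\mathcal{H}=0$, then $\underline{\mathbf{L}}_X^p$ is an \emph{exact} complex of locally free sheaves on $\pp^{q-1}$ of length $d+1$, of the form $0\to\sO_\pp(-d)\otimes V_0\to\cdots\to\sO_\pp\otimes V_d\to 0$ with each $V_i=H^i(X,\Omega_X^p)\ne 0$, and Lemma \ref{lem}(i) forces $q\le d$, contradicting $q>d$. This already yields $(-1)^{d-p}\chi(\Omega_X^p)\ge 1$ for $2\le p\le d-2$.

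It remains to show $\operatorname{rank}\mathcal{H}\ge 2$ when $p=1$, and this is where I expect the real work to lie. Assume first $d\ge 3$ and, for contradiction, $\operatorname{rank}\mathcal{H}=1$; then $\mathcal{H}\cong\sO_\pp(a)$ for some $a\in\zz$, since line bundles on $\pp^{q-1}$ are the $\sO_\pp(a)$. Let $\mathcal{M}$ be the cokernel of $\phi_{d-1}\colon T_{d-2}\to T_{d-1}$, a locally free sheaf because $\phi_{d-1}$ has constant rank. The exactness of $\underline{\mathbf{L}}_X^1$ to the left of position $d-1$ produces an exact resolution $0\to T_0\to\cdots\to T_{d-1}\to\mathcal{M}\to 0$, and the exactness at position $d$ produces a short exact sequence $0\to\mathcal{H}\to\mathcal{M}\to\sO_\pp\otimes H^d(X,\Omega_X^1)\to 0$. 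Two remarks combine. First, since $q\ge 3$ one has $H^1(\pp^{q-1},\sO_\pp(a))=0$, so this sequence splits and $\mathcal{M}\cong\sO_\pp(a)\oplus\sO_\pp^{\oplus h^{1,d}}$. Second, the surjection $\sO_\pp\otimes H^{d-1}(X,\Omega_X^1)=T_{d-1}(1)\twoheadrightarrow\mathcal{M}(1)$ shows $\mathcal{M}(1)$ is globally generated, so the summand $\sO_\pp(a+1)$ is globally generated and $a\ge -1$. Substituting $\mathcal{M}\cong\sO_\pp(a)\oplus\sO_\pp^{\oplus h^{1,d}}$ into the resolution yields an exact complex
$$0\to T_0\to\cdots\to T_{d-1}\to\bigl(\sO_\pp(a)\oplus\sO_\pp^{\oplus h^{1,d}}\bigr)\to 0$$
of exactly the shape of Lemma \ref{lem}(ii), with $e=d-1$ and the exponents $a$ and $0$ both $\ge -1=-d+e$; that Lemma forces $q\le d$, a contradiction. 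Hence $\operatorname{rank}\mathcal{H}\ge 2$, i.e.\ $(-1)^{d-1}\chi(\Omega_X^1)\ge 2$.

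The case $d=2$ has to be treated by hand, since Lemma \ref{lem}(ii) needs $e\ge 2$: there $\operatorname{rank}\mathcal{H}^1=1$ forces, by a first Chern class computation, $\mathcal{H}^1\cong\sO_\pp(-1)$, hence (splitting off $\operatorname{im}\phi_1\cong\sO_\pp(-2)^{\oplus q}$) $Z:=\ker\phi_2\cong\sO_\pp(-2)^{\oplus q}\oplus\sO_\pp(-1)$, which has $H^1(\pp^{q-1},Z)=0$; but the sequence $0\to Z\to T_1\to T_2\to 0$ gives $H^1(\pp^{q-1},Z)\cong H^0(\pp^{q-1},T_2)=\cc^{\,q}\ne 0$. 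In summary, the main obstacle is the $p=1$ step above, specifically recognizing that the extension presenting $\mathcal{M}$ splits and that its line-bundle summand is twisted by at least $-1$: both rely on $q\ge 3$ and on the feature, special to $p=1$, that the resolution of $\mathcal{M}$ ends in a single $(-1)$-twisted term — for general $p$ it ends in a $(-p)$-twisted term, giving only the weaker bound $a\ge -p$, which is why the stronger inequality appears only for $p=1$ (equivalently, by Serre duality, $p=d-1$).
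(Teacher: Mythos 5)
Your architecture is sound and is essentially the paper's own proof: all maps of $\underline{\llll}_X^p$ have constant rank by Proposition \ref{exact}, the unique cohomology sheaf is locally free of rank $(-1)^{d-p}\chi(\Omega_X^p)$, Lemma \ref{lem}(i) excludes rank $0$, and the rank-$1$ case is excluded by splitting an extension (vanishing of $H^1$ of a line bundle on $\pp^{q-1}$) and invoking Lemma \ref{lem}(ii). The only structural difference is that you run the rank-$\geq 2$ step at $p=1$, using the cokernel $\mathcal{M}$ and the surjection onto the last term, while the paper runs the Serre-dual version at $p=d-1$, using the kernel $G$ and the injection from the first term; these are mirror arguments of equal strength. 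Your separate treatment of $d=2$ (where $e\geq 2$ in Lemma \ref{lem}(ii) fails) is correct and is a point the paper's text passes over silently.

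There is, however, one genuine flaw: the parenthetical claim that $h^{p,i}(X)>0$ for all $0\leq p,i\leq d$ whenever $q>d$, justified by $h^0(X,\Omega_X^{k})\geq \binom{q}{k}$. That inequality is unjustified and false in general: the wedge map $\bigwedge^k H^1(X,\sO_X)\to H^k(X,\sO_X)$ need not be injective, and the positivity claim itself fails for arbitrary compact K\"ahler manifolds with $q>d$ (take $X=C\times \pp^1$ with $g(C)>2$: then $q=g>2=d$ but $h^{0,2}=0$). This is not a cosmetic issue, because every one of your appeals to Lemma \ref{lem} requires all the vector spaces appearing in the complex to be nonzero; if some $H^i(X,\Omega_X^p)$ vanished, the exact complex would break into shorter pieces and the Lemma as you cite it would not apply. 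The needed positivity is true under the actual hypothesis $m(X)=d$, but it requires the Catanese-type input: Lemma \ref{pencil} excludes higher irrational pencils, Proposition \ref{m=d} then gives $h^{0,k}\geq k(q-k)+1>0$ for $k\leq d$, and your Lefschetz/Serre-duality reduction upgrades this to $h^{p,i}>0$; note your justification never uses $m(X)=d$ at all. Alternatively one can bypass most of this, as the paper does, by proving only $h^{p,d}=h^{d-p,0}\neq 0$ (via the complex $\underline{\llll}_X^{d}$, exact at the first $d$ steps from the right, and Lemma \ref{lem}) and formulating the contradictions for exact complexes of length at most $d+1$, handling possibly vanishing intermediate terms by splitting into subcomplexes. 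With either repair your proof is complete; as written, this step is wrong.
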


\begin{proof}
To begin with, 
we note that $h^d(X,\Omega_X^p)\neq 0$ so that the complex $\underline{\llll}_X^p$ is non-zero. 
By Proposition \ref{exact} (ii), the assumption $m(X)=d$ implies that the non-zero complex 
$\underline{\mathbf{L}}_X^{d}$ 
is exact at the first $d$ steps from the right. If we had $h^d(X,\Omega_X^p)=h^p(X,\omega_X)=0$, then
$\underline{\mathbf{L}}_X^{d}$ would induce an exact complex of length $\leq d$ whose terms are sums of line bundles all of the
same degree, and by Lemma \ref{lem} we would have a contradiction. 

By Proposition \ref{exact} the complex $\underline{\mathbf{L}}_X^{p}$ is exact at the first $d-p$ steps 
from the left and at the first $p$ steps from the right. Therefore we get two exact sequences: 
$$0\longrightarrow \sO_{\pp}(-d)\otimes H^0(X,\Omega_X^p)\longrightarrow \ldots \longrightarrow \sO_{\pp}(-p-1)\otimes H^{d-p-1}(X,\Omega_X^p)
\stackrel{f}{\longrightarrow}$$ 
$$\stackrel{f}{\longrightarrow} \sO_{\pp}(-p)\otimes H^{d-p}(X,\Omega_X^p)\longrightarrow F\longrightarrow 0,$$ where the locally free sheaf
$F$ is the cokernel of the map
$f$, and 
$$0\longrightarrow G\longrightarrow \sO_{\pp}(-p)\otimes H^{d-p}(X,\Omega_X^p)\stackrel{g}{\longrightarrow}$$
$$\stackrel{g}{\longrightarrow} \sO_{\pp}(-p+1)\otimes H^{d-p+1}(X,\Omega_X^p)
\longrightarrow \ldots \longrightarrow \sO_{\pp}\otimes H^d(X,\Omega_X^p)
\longrightarrow 0,$$ where the locally free sheaf $G$ is the kernel of the map $g$. We also get 
an induced map of locally free sheaves $h:F \longrightarrow \sO_{\pp}(-p+1)\otimes H^{d-p+1}(X,\Omega_X^p)$,
which is of constant rank. 
Denoting by $E$ the kernel of $h$ we obtain a new exact sequence of locally free sheaves 
$$0\longrightarrow E\longrightarrow F\longrightarrow \sO_{\pp}(-p+1)\otimes H^{d-p+1}(X,\Omega_X^p)\longrightarrow \ldots \longrightarrow \sO_{\pp}\otimes
 H^d(X,\Omega_X^p)\longrightarrow 0$$ from which we can read ${\rm rank}\, E=(-1)^{d-p}\chi(\Omega_X^p)$.
If the locally free sheaf $E$ were the zero sheaf then the complex $\underline{\llll}_X^p$ would be an exact complex 
of length $\leq d+1$ whose terms are sum of 
line bundles all of the same degree, which is impossible by Lemma \ref{lem} (i). Thus 
$${\rm rank}\; E=(-1)^{d-p}\chi(\Omega_X^p)\geq 1.$$
If $p=d-1$ we can improve our bound. In this case the complex $\underline{\llll}_X^{d-1}$ is exact
at the first $d-1$ steps from the right, and hence we get a short exact sequence 
$$0\longrightarrow \sO_{\pp}(-d)\otimes H^0(X,\Omega_X^{d-1})\stackrel{h'}{\longrightarrow} G\longrightarrow E'\longrightarrow 0,$$
where $E'$ is the cokernel of the map $h'$. The locally free sheaf $E'$ is non-zero again by Lemma \ref{lem}.
If the rank of $E'$ were one, then $E'$ would be a line bundle, $E'=\sO_{\pp}(k)$ for some integer $k$, and 
$G\in \mbox{Ext}^1(\sO_{\pp}(k),\sO_{\pp}(-d)\otimes H^0(X,\Omega_X^{d-1}))=H^1(\pp,\sO_{\pp}(d-k)\otimes H^0(X,\Omega_X^{d-1})^{\vee})=0$.
Hence $G$ would split as a sum of line bundles and by Lemma \ref{lem} (ii) this is not possible. 
Therefore $${\rm rank}\; E'=(-1)^{d-1}\chi(\Omega_X^1)\geq 2.$$
\end{proof}

% \begin{ex}
% In this example we compute the Euler characteristic of the bundle of holomorphic $1$-forms for a smooth principal polarization 
% of an abelian variety, so that we can compare it with Corollary \ref{cor}. Let $\Theta$ be a smooth 
% principal polarization (i.e. $h^0(A,\sO(\Theta))=1$) of a $g$-dimensional abelian variety $A$, where $g\geq 3$. Using the 
% fact that the tangent bundle of an abelian variety is trivial and the conormal exact sequence 
% $$0\longrightarrow \sO_{\Theta}(-\Theta)\longrightarrow \bigoplus_g \sO_{\Theta}\longrightarrow \Omega_{\Theta}^1\longrightarrow 0$$
% we have that 
% $$\chi(\Omega_{\Theta}^1)=g\cdot \chi(\sO_{\Theta})-\chi(\sO_{\Theta}(-\Theta))=g\cdot \chi(\sO_{\Theta})-(-1)^g\chi(\sO_{\Theta}(2\Theta))=(-1)^g(2^g-g-1).$$ 
% \end{ex}
% Following \cite{GL1} (Remark after the proof of Proposition 3.7), the cohomological support loci
% $$V^i(\Omega^j_X)=\{\alpha \in \mbox{Pic}^0(X) \;|\; H^i(X,\Omega^j_X \otimes \alpha)\neq 0\}\subset \mbox{Pic}^0(X)$$ 
% consist of finitely many points as long as $i+j<m(X)$. In particular
% for $m(X)=d$ the loci $V^i(\omega_X)$ consist of finitely many points for every $i>0$. It follows that if $q\geq d$ then the sheaf $\omega_X$ is a GV-sheaf (i.e. 
% $\mbox{codim} V^i(\omega_X)\geq i$ for 
% every $i>0$) and hence by Proposition 1.19 in \cite{PA} $X$ is of maximal Albanese dimension (i.e. the Albanese map is generically finite 
% onto its image). For generalities and further developments about GV-sheaves we refer to \cite{GV}, \cite{GSV} and \cite{PA}.

\section{Inequalities for the Hodge Numbers}

After having studied the exactness of the complex \eqref{BGG-intr} we can derive inequalities for the Hodge numbers by using well-known results 
for locally free sheaves on projective spaces: the Evans-Griffith Theorem and the non negativity of the Chern classes for 
globally generated locally free sheaves. 
% \begin{theorem}[Evans-Griffith Theorem]
%  Let $F$ be a vector bundle of rank $f\geq 2$ on the projective space $\pp ^n$ satisfying 
% $$H^i(\pp ^n,F(k))=0$$ for all $1\leq i\leq f-1$ and all $k\in \zz$. Then $F$ splits as a sum of line bundles.
% \end{theorem}

% Inequalities hold for irregular compact K\"{a}hler manifolds $X$ with
% $m(X)={\rm min}\{{\rm codim}\;Z(\omega)\;|\; 0\neq \omega\in H^0(X,\Omega_X^1)\geq 2$.

Throughout this section we fix integers $d\geq 1$, $q\geq 1$, $0\leq p\leq d$ and $0< m\leq d$. We denote by $h^{p,q}=h^{p,q}(X)=
\ddim H^q(X,\Omega_X^p)$ the Hodge numbers of $X$ and by $q=q(X)$ the irregularity of $X$.

Before stating the results we need to introduce some notation.
If $d-p<m\leq d$, for $1\leq i\leq q-1$ we define $\gamma_i(X,\Omega_X^p)$ to be the coefficient of $t^i$ in the formal power series:
$$\gamma(X,\Omega_X^p;t)\stackrel{def}{=} \prod_{j=1}^{m-d+p}(1-jt)^{(-1)^j h^{p,2d-m-p+j}}\in \zz[[t]].$$
If $p<m\leq d$, for $1\leq i\leq q-1$ we define $\delta_i(X,\Omega_X^p)$ to be the coefficient of $t^i$ in the formal power series:
$$\delta(X,\Omega_X^p;t)\stackrel{def}{=}\prod_{j=1}^{m-p} (1-jt)^{(-1)^jh^{p,m-p-j}}\in \zz[[t]].$$
If $m=\infty$, for $i=1,\ldots q-1$ we define $\varepsilon_i(X,\Omega_X^p)$ to be the coefficient 
of $t^i$ in the formal power series:
$$\varepsilon(X,\Omega_X^p;t)\stackrel{def}{=}\prod_{j=1}^{d} (1-jt)^{(-1)^jh^{p,d-j}}\in \mathbf{Z}[[t]].$$
Also consider the following pieces of the Euler characteristic of the bundle $\Omega_X^p$. If $d-p<m \leq d$ define 
$$\chi^{\geq 2d-m-p}(\Omega_X^p) \stackrel{def}{=}\sum_{j=2d-m-p}^d (-1)^{2d-m-p+j}h^{p,j}$$ and if $p<m\leq d$ define
$$\chi^{\leq m-p}(\Omega_X^p) \stackrel{def}{=}\sum_{j=0}^{m-p}(-1)^{m-p+j}h^{p,j}.$$

\begin{theorem}\label{schur}
Let $X$ be a compact K\"{a}hler manifold of dimension $d$ and irregularity $q\geq 1$. Let 
$m=m(X)={\rm min}\{\codim Z(\omega)\;|\; 0\neq \omega \in H^0(X,\Omega_X^1)\}$ and let $0\leq p\leq d$ be an integer. 
\begin{enumerate}
 \item If $d-p<m\leq d$ then any Schur polynomial of weight $\leq q-1$ in the $\gamma_i(X,\Omega_X^p)$ is non-negative. 
In particular $$\gamma_i(X,\Omega_X^p)\geq 0$$ for every
$1\leq i\leq q-1$. Moreover, if $i$ is an index with $\chi^{\geq 2d-m-p}(\Omega_X^p)<i<q$, then $\gamma_i(X,\Omega_X^p)=0.$\\ 
 \item If $p<m\leq d$ then any Schur polynomial of weight $\leq q-1$ in the $\delta_i(X,\Omega_X^p)$ is non-negative. 
In particular $$\delta_i(X,\Omega_X^p)\geq 0$$ for every
$1\leq i\leq q-1$. Moreover, if $i$ is an index with $\chi^{\leq m-p}(\Omega_X^p)<i<q$, then $\delta_i(X,\Omega_X^p)=0$.\\
\item If $m=\infty$ then $$\varepsilon_i(X,\Omega_X^p)=0$$ for every $i=1,\ldots, q-1$.
\end{enumerate}
\end{theorem}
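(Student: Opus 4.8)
The plan is to run all three cases through one mechanism: use Proposition \ref{exact} to locate a stretch of $\underline{\mathbf{L}}_X^p$ (or of its dual) that is exact with constant-rank differentials, peel off a \emph{locally free cokernel} $F$ at the end of that stretch, and then exhibit a suitable twist of $F$ as a globally generated bundle whose total Chern class is precisely the prescribed generating function evaluated at the hyperplane class $H$ of $\pp=\pp^{q-1}$. The positivity and vanishing assertions then fall out of two standard facts about projective space: a globally generated locally free sheaf has non-negative Chern classes (more generally, non-negative Schur polynomials), and the $i$-th Chern class of a bundle of rank $r$ vanishes for $i>r$.

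For (i), set $N=m-d+p\geq 1$ and dualize $\underline{\mathbf{L}}_X^p$ over $\pp$. By Proposition \ref{exact}(ii) the complex $(\underline{\mathbf{L}}_X^p)^{\vee}$, which reads $0\to\sO_{\pp}\otimes H^{d}(X,\Omega_X^p)^{\vee}\to\sO_{\pp}(1)\otimes H^{d-1}(X,\Omega_X^p)^{\vee}\to\cdots$, is exact at its first $N$ steps from the left with its first $N$ maps of constant rank. Letting $F$ be the cokernel of the $N$-th of these maps, constancy of rank makes $F$ locally free, and splicing the exact part gives a finite resolution $0\to\sO_{\pp}\otimes H^{d}(X,\Omega_X^p)^{\vee}\to\cdots\to\sO_{\pp}(N)\otimes H^{2d-m-p}(X,\Omega_X^p)^{\vee}\to F\to 0$. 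Twisting by $\sO_{\pp}(-N)$ presents $F(-N)$ as a quotient of the trivial bundle $\sO_{\pp}\otimes H^{2d-m-p}(X,\Omega_X^p)^{\vee}$, so $F(-N)$ is globally generated; a direct computation with the multiplicativity of total Chern classes along the resolution, reindexed by $j=N-i$, gives $c(F(-N))=\gamma(X,\Omega_X^p;H)$ and ${\rm rank}\, F(-N)=\chi^{\geq 2d-m-p}(\Omega_X^p)$. Case (ii) is the mirror image: one works with $\underline{\mathbf{L}}_X^p$ itself, uses Proposition \ref{exact}(i), takes $F$ to be the cokernel of the $(m-p)$-th map, and twists by $\sO_{\pp}(d-m+p)$, obtaining a globally generated $F(d-m+p)$ with $c(F(d-m+p))=\delta(X,\Omega_X^p;H)$ and ${\rm rank}\, F(d-m+p)=\chi^{\leq m-p}(\Omega_X^p)$. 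Case (iii) is the degenerate limit: by Proposition \ref{exact}(iii), $\underline{\mathbf{L}}_X^p$ is an everywhere exact complex of sums of line bundles, so multiplicativity of Chern classes forces $\varepsilon(X,\Omega_X^p;H)^{(-1)^d}=1$ in $H^{*}(\pp^{q-1},\zz)$, hence $\varepsilon_i(X,\Omega_X^p)=0$ for $1\leq i\leq q-1$.

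With the globally generated bundle $F(-N)$ (respectively $F(d-m+p)$) in hand, (i) and (ii) are immediate. A globally generated, hence nef, locally free sheaf has the property that every Schur polynomial of weight $w\leq q-1$ in its Chern classes is represented by a non-negative class on $\pp^{q-1}$; since $c_i(F(-N))=\gamma_i(X,\Omega_X^p)\,H^i$ for $1\leq i\leq q-1$, this says exactly that every Schur polynomial of weight $\leq q-1$ in the $\gamma_i(X,\Omega_X^p)$ is $\geq 0$, in particular $\gamma_i(X,\Omega_X^p)\geq 0$. For the vanishing, $c_i$ of a bundle of rank $r$ is zero once $i>r$; since ${\rm rank}\, F(-N)=\chi^{\geq 2d-m-p}(\Omega_X^p)$, any index $i$ with $\chi^{\geq 2d-m-p}(\Omega_X^p)<i<q$ gives $\gamma_i(X,\Omega_X^p)=0$, and identically $\delta_i(X,\Omega_X^p)=0$ whenever $\chi^{\leq m-p}(\Omega_X^p)<i<q$.

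The genuine content is the bookkeeping: pinning down the correct twist and the translation between the running index $j$ and the cohomological degree so that the Chern polynomial of the cokernel matches $\gamma$, $\delta$, $\varepsilon$ exactly, and matching ${\rm rank}\, F$ with $\chi^{\geq 2d-m-p}$ (resp. $\chi^{\leq m-p}$); an off-by-one would contaminate the whole statement. One degenerate case must also be recorded: if the relevant boundary cohomology group vanishes then $F$ is the zero sheaf, but then all the $\gamma_i$ (resp. $\delta_i$) vanish and the corresponding Euler characteristic is $0$, so every assertion holds trivially and nothing is lost. I emphasize that it is the \emph{constant-rank} half of Proposition \ref{exact}, not merely exactness, that is doing the work here: that is precisely what makes the cokernel $F$ locally free rather than only coherent, which is what the Chern-class and global-generation arguments require.
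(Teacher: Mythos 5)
Your proof is correct and follows essentially the same route as the paper: Proposition \ref{exact} gives the exact, constant-rank stretch, one extracts a locally free (co)kernel, twists it into a globally generated bundle whose Chern polynomial is $\gamma$, $\delta$, or (in the exact case) trivial, and concludes from non-negativity of Schur polynomials of globally generated bundles and vanishing of $c_i$ above the rank. The only cosmetic difference is that in case (i) you dualize $\underline{\mathbf{L}}_X^p$ first and take a cokernel, whereas the paper takes the kernel $G$ of the original complex and dualizes and twists afterwards --- the resulting bundle is the same.
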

\begin{proof}
If $m>d-p$ then by Proposition \ref{exact} (ii) the complex $\underline{\llll}_X^p$ is exact at the first $m-d+p$ 
steps from the right, and hence we get the exact sequence 
\begin{equation}\label{ker}
 0\longrightarrow G\longrightarrow \sO_{\pp}(d-m-p)\otimes H^{2d-m-p}(X,\Omega_X^p)\stackrel{g}{\longrightarrow}
 \end{equation}
 \begin{equation*}
 \stackrel{g}{\longrightarrow} \sO_{\pp}(d-m-p+1)\otimes H^{2d-m-p+1}(X,\Omega_X^p)
 \longrightarrow \ldots \longrightarrow \sO_{\pp}\otimes H^d(X,\Omega_X^p)\longrightarrow 0,
 \end{equation*}
where $G$ is the kernel of the map $g$.
Tensoring \eqref{ker} by $\sO_{\pp}(d-m-p)$ and then dualizing it, 
we note that the polynomial $\gamma(X,\Omega_X^p;t)$ is the Chern polynomial of the locally free sheaf $G^{\vee}(m-d+p)$. 
Then its Chern classes $c_i(G^{\vee}(m-d+p))$, as well as 
the Schur polynomials in these, are non-negative since $G^{\vee}(m-d+p)$ is globally generated. 
In particular we get
$$\gamma_i(X,\Omega_X^p)={\rm deg}\; c_i(G^{\vee}(m-d+p))\geq 0.$$
The last statement of (i) follows from the fact that $c_i(G)=0$ for $i>{\rm rank}\;G=\chi^{\geq 2d-m-p}(\Omega_X^p)$.

The proof of (ii) is analogous to the proof of the previous point. If $m>p$ then by Proposition \ref{exact} (i) the 
complex $\underline{\llll}_X^p$ is exact at the first $m-p$ steps from the left and induces the following exact complex 
\begin{equation}\label{coker}
 0\longrightarrow \sO_{\pp}(-d)\otimes H^0(X,\Omega_X^p)\longrightarrow \ldots \longrightarrow \sO_{\pp}(-d+m-p-1)\otimes H^{m-p-1}(X,\Omega_X^p)
 \stackrel{f}{\longrightarrow} 
 \end{equation}
 \begin{equation*}
 \stackrel{f}{\longrightarrow} \sO_{\pp}(-d+m-p)\otimes H^{m-p}(X,\Omega_X^p)\longrightarrow F\longrightarrow 0
 \end{equation*}
where $F$ is the cokernel of the map $f$.
Tensoring \eqref{coker} by $\sO_{\pp}(d-m+p)$ we get
that the locally free sheaf $F(d-m+p)$ is globally generated and moreover that its Chern polynomial is the polynomial 
$\delta(X,\Omega_X^p;t)$. Now we conclude as in (i).

If $m=\infty$ then the complex $\underline{\mathbf{L}}_X^p$ is everywhere exact and the polynomial $\varepsilon(X,\Omega_X^p;t)$ is 
just the Chern polynomial of the zero sheaf. Thus its Chern classes satisfy $\varepsilon_i(X,\Omega_X^p)=0$, for every $i=1,\ldots ,q-1$.
\end{proof}

Under the assumption of Theorem \ref{schur} we also have 
\begin{theorem}\label{schur2}
 \begin{enumerate}
 \item If $d-p<m\leq d$ and $q>{\rm max}\{m-d+p,d-p-1\}$ then 
$$\chi^{\geq 2d-m-p}(\Omega_X^p)\geq q+d-m-p$$ and $$h^{d-p,1}\geq h^{d-p,0}+q-1.$$\\

\item If $p<m\leq d$ and $q>{\rm max}\{m-p,p-1\}$ then 
$$\chi^{\leq m-p}(\Omega_X^p) \geq q-m+p$$ and 
$$h^{p,1}\geq h^{p,0}+q-1.$$\\
% \item In particular if $m(X)=d$ and $q\geq d$ then
% $$\chi (\omega_X)\geq q-d.$$
 \end{enumerate}
\end{theorem}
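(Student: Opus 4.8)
The statement is parallel to the bounds already proven in Corollary~\ref{cor} and follows the same strategy as Theorem~\ref{schur}: exploit the exactness of $\underline{\llll}_X^p$ from one side, extract a globally generated locally free sheaf, and then combine the Evans--Griffith theorem (to force a large rank) with the non-negativity of its first Chern class (to convert into Hodge-number inequalities). Since (i) and (ii) are interchanged by Serre duality via the identification $H^i(X,\Omega_X^p)^\vee\cong H^{d-i}(X,\Omega_X^{d-p})$, it suffices to prove (ii) and then quote Proposition~\ref{exact}(i)$\leftrightarrow$(ii) duality; concretely, applying (ii) to $\Omega_X^{d-p}$ gives (i) after reindexing, so I would write out (ii) and remark that (i) is obtained by the same argument run from the right using Proposition~\ref{exact}(ii).

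\emph{Step 1: produce the globally generated sheaf.} Assuming $p<m\le d$, Proposition~\ref{exact}(i) gives that $\underline{\llll}_X^p$ is exact at the first $m-p$ steps from the left with constant-rank maps, so (as in the proof of Theorem~\ref{schur}(ii)) the complex $\eqref{coker}$ is exact, and after twisting by $\sO_{\pp}(d-m+p)$ the cokernel $F(d-m+p)$ is a globally generated locally free sheaf on $\pp=\pp^{q-1}$, whose Chern polynomial is $\delta(X,\Omega_X^p;t)$. From $\eqref{coker}$ one reads off $\operatorname{rank}F(d-m+p)=\operatorname{rank}F=\chi^{\le m-p}(\Omega_X^p)=\sum_{j=0}^{m-p}(-1)^{m-p+j}h^{p,j}$, after cancelling the alternating sum.

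\emph{Step 2: rule out small rank.} If $\operatorname{rank}F$ were $\le q-m+p-1$, i.e.\ if $\chi^{\le m-p}(\Omega_X^p)<q-m+p$, then $\eqref{coker}$ would be an exact complex of locally free sheaves on $\pp^{q-1}$ of length $m-p+1$ (after the twist: terms $\sO_\pp(-d+m-p-i)\otimes V_i$, hence all twists distinct and the last map surjecting onto a bundle of the right form). I would invoke the Evans--Griffith syzygy theorem: a bundle appearing as an $(m-p)$-th syzygy (equivalently, the complex $\eqref{coker}$ viewed as giving a short resolution of $F$) whose rank is too small must split off a line bundle or free summand, and tracing that splitting back through $\eqref{coker}$ contradicts the structure of a complex of line bundles of strictly increasing twists on $\pp^{q-1}$ with $q-1\ge m-p$ — this is exactly the mechanism of Lemma~\ref{lem}. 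The hypothesis $q>\max\{m-p,p-1\}$ is what makes $\pp^{q-1}$ big enough for this obstruction to bite (the $m-p$ bound handles the left-hand exactness length; the $p-1$ bound is needed for the dual statement (i)). This gives $\chi^{\le m-p}(\Omega_X^p)\ge q-m+p$.

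\emph{Step 3: the $h^{p,1}$ inequality.} For the second inequality I would look at the very beginning of $\eqref{coker}$: the map $\sO_\pp(-d)\otimes H^0(X,\Omega_X^p)\to\sO_\pp(-d+1)\otimes H^1(X,\Omega_X^p)$ is injective with constant rank, so its cokernel $Q$ is locally free, and after twisting to $\sO_\pp\otimes H^0\hookrightarrow\sO_\pp(1)\otimes H^1$ we get $0\to\sO_\pp^{\oplus h^{p,0}}\to\sO_\pp(1)^{\oplus h^{p,1}}\to Q(d)\to 0$ with $Q(d)$ globally generated. Non-negativity of $c_1(Q(d))$ forces $h^{p,1}-h^{p,0}\cdot 0\ge\ldots$ — more precisely, comparing degrees of $c_1$: $c_1(Q(d))=h^{p,1}$ as a multiple of the hyperplane class minus nothing from the trivial sub, wait — $c_1$ of the sub is $0$, of the middle is $h^{p,1}$, so $c_1(Q(d))=h^{p,1}$; that alone is not enough. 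Instead I would argue that $Q(d)$ is globally generated of rank $h^{p,1}-h^{p,0}$, and on $\pp^{q-1}$ with $q-1\ge 1$ a globally generated bundle of rank $r<q-1$ whose determinant has degree $<$ (something) forces splitting; the clean statement is that a globally generated bundle on $\pp^{q-1}$ with $c_1=1$ and rank $<q-1$ must have a trivial summand, and iterating forces $h^{p,1}-h^{p,0}\ge q-1$ unless $c_1>1$, but here $c_1=h^{p,1}$. Rather than this, the cleanest route: apply Step 2's rank bound in the degenerate length-$1$ case, or directly use that $Q(d)$ globally generated with a $1$-dimensional space of sections of the twist — I would in fact deduce $h^{p,1}\ge h^{p,0}+q-1$ from the Evans--Griffith bound applied to the single injection $\sO_\pp^{\oplus h^{p,0}}\hookrightarrow\sO_\pp(1)^{\oplus h^{p,1}}$: its cokernel is a $(q-1)$-dimensional-base bundle, globally generated, and if its rank were $<q-1$ it would split off $\sO_\pp$ or $\sO_\pp(1)$, contradicting injectivity of the map on global sections after checking the induced sequence on $H^0(\sO_\pp(-1))$ and $H^0$. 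This last bookkeeping is the fiddly part.

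\emph{Main obstacle.} The real work is Step 2 and the $h^{p,1}$ case of Step 3: verifying that the Evans--Griffith splitting, once produced, genuinely contradicts the shape of the complex — i.e.\ that a free or line-bundle summand of a syzygy bundle cannot be reconciled with a complex of line bundles $\sO_\pp(-d+m-p-i)$ of strictly increasing degree on $\pp^{q-1}$ once $q$ exceeds the stated max. This is precisely the content packaged in Lemma~\ref{lem}, so the proof reduces to carefully matching the present exact sequences to the hypotheses of Lemma~\ref{lem}(i) and (ii) (with the length parameter $e$ taken to be $m-p$ or $m-p\pm 1$ as appropriate) and to the numerical Evans--Griffith inequality $\operatorname{rank}\ge(\text{number of steps})$; everything else is the degree-of-$c_1$ computation already used in Theorem~\ref{schur}.
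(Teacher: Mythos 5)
Your overall strategy (exactness from Proposition \ref{exact}, a globally generated cokernel/kernel bundle, Evans--Griffith plus the mechanism of Lemma \ref{lem}, and a separate argument at the extreme map of the complex for the $h^{\cdot,1}$ inequality) is the same as the paper's, but two genuine gaps remain. The main one: you never establish the non-vanishing $h^0(X,\Omega_X^p)\neq 0$ in (ii) (resp.\ $h^d(X,\Omega_X^p)\neq 0$ in (i)), and you misassign the role of the hypothesis $q>{\rm max}\{m-p,p-1\}$: the condition $q>p-1$ is not there ``for the dual statement (i)'' --- it is used inside (ii) itself to prove $h^0(X,\Omega_X^p)=h^d(X,\Omega_X^{d-p})\neq 0$, by the same truncation-plus-Lemma \ref{lem} argument as in Corollary \ref{cor} applied to the complex for $\omega_X$ (and symmetrically $q>d-p-1$ gives $h^d(X,\Omega_X^p)\neq 0$ in (i)). This non-vanishing is indispensable at two places in your sketch: it is what makes the complex $\underline{\llll}_X^p$ non-trivial so that Lemma \ref{lem} can be invoked in your Step 2, and it is exactly what makes Step 3 bite --- if $h^{p,0}=0$ your cokernel is just $\sO_{\pp}(1)^{\oplus h^{p,1}}$, which splits, and no contradiction (nor the inequality $h^{p,1}\geq q-1$) comes out. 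The paper instead gets the second inequality cleanly from the surjection $\sO_{\pp}\otimes H^{d-1}(X,\Omega_X^p)\to \sO_{\pp}(1)\otimes H^{d}(X,\Omega_X^p)\to 0$ (resp.\ the dual of the first map in case (ii)) together with the non-vanishing of the target and the fact (\cite{LA} Example 7.2.2) that an ample globally generated bundle of rank $e$ on a $(q-1)$-dimensional variety cannot be generated by fewer than $e+q-1$ sections; your meandering Step 3 is salvageable along these lines but is not a proof as written.

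The second gap is the Evans--Griffith bookkeeping in Step 2. The inequality you quote at the end, ``${\rm rank}\geq$ (number of steps)'', is the syzygy-order form and would only yield ${\rm rank}\,F\gtrsim m-p$, not the claimed $q-m+p$; the bound involving $q$ comes from the splitting-criterion form: from the length-$(m-p+1)$ resolution of $F$ by sums of line bundles in \eqref{coker} one gets $H^i(F(k))=0$ for all $k$ and $1\leq i\leq q-m+p-2$, so if ${\rm rank}\,F\leq q-m+p-1$ then $F$ splits \emph{completely} as a direct sum of line bundles, and only then does Lemma \ref{lem}(ii) (applied with $e=m-p$) give $q\leq m-p+1$. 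Note also that ``splitting off one line bundle'' contradicts nothing structural about increasing twists; the contradiction is the cohomological bound on $q$ from Lemma \ref{lem}, and in the borderline case $q=m-p+1$ it does \emph{not} arise --- there one must argue separately (as the paper does for $q=m-d+p+1$ in (i)) that only ${\rm rank}\geq 1$ is needed, which follows from $F\neq 0$ via Lemma \ref{lem}(i). So your Step 2 states the right target, but the mechanism as written would not deliver it.
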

\begin{proof}
(i). By Proposition \ref{exact} the complex $\underline{\mathbf{L}}_X^p$ is 
exact at the first $m-d+p$ steps from the right. Since $q>d-p-1$ we can prove, with an argument similar to the one used in 
Corollary \ref{cor}, that $h^d(X,\Omega_X^p)$ is non-zero and hence that 
the complex $\underline{\mathbf{L}}_X^p$ is non-zero as well (this observation allow us to use Lemma \ref{lem}). 
If $q=m-d+p+1$ then it is enough to prove that the rank of the locally free sheaf $G$ in \eqref{ker} 
is at least one. By Lemma \ref{lem} the locally free sheaf $G$ is neither zero nor 
splits as a sum of line bundles. 
Then by the Evans-Griffith Theorem (see \cite{LA} p. 92) $${\rm rank}\,G=\chi^{\geq 2d-m-p}(\Omega_X^p)\geq q+d-m-p.$$
For the inequality $h^{d-p,1}\geq h^{d-p,0}+q-1$, it is enough to note that 
the complex $\underline{\mathbf{L}}_X^p$ induces a surjection 
$H^{d-1}(X,\Omega_X^p)\otimes \sO_{\pp}\longrightarrow H^d(X,\Omega_X^p)\otimes \sO_{\pp}(1)\longrightarrow 0$
and, since $h^d(X,\Omega_X^p)\neq 0$, we can conclude thanks to Example 7.2.2 in \cite{LA}. 

(ii). The hypothesis $p< m\leq d$ implies that the complex $\underline{\mathbf{L}}_X^p$ is exact at the first $m-p$ steps 
from the left. 
Since $q>p-1$ we have that $h^0(X,\Omega_X^p)=h^d(X,\Omega_X^{d-p})\neq0$ as in (i), and therefore that  
the complex $\underline{\mathbf{L}}_X^p$ is non-zero as well. After having noted that 
${\rm rank}\; F=\chi^{\leq m-p}(\Omega_X^p)$ we can argue as in the previous point.
% (iii). The inequality $\chi(\omega_X)\geq q-d$ follows trivially by (i) for $q>d$. For the case $q=d$ the statement reduces to 
% $\chi(\omega_X)\geq 0$ which is a consequence of the Nakano Type Generic Vanishing Theorem (cf. \cite{GL1} Theorem 3.1). 

% There is also another proof of $\chi(\omega_X)\geq q-d$ and it goes as follows. It was observed in \ref{GL1} that for such manifolds 
% the cohomological 
% support loci $V^i(\omega_X)=\{\alpha \in \mbox{Pic}^0(X)\;|\;H^i(X,\omega_X\otimes \alpha)\neq 0\}$ consists of a finite number of points
% for every $i>0$. In particular the sheaf $\omega_X$ is a GV-sheaf (i.e. $\mbox{codim}_{\mbox{Pic}^0(X)}\;V^i(\omega_X)\geq i$, for any
% $i>0$) and by Remark 1.5 of \cite{LP} $X$ is of maximal Albanese dimension.  
\end{proof}

\subsection {The case $m(X)=\ddim X$}
When $X$ is an irregular compact K\"{a}hler manifold with $m(X)=\ddim X$ further inequalities 
hold thanks to Catanese's work \cite{CAT}. 
Let ${\rm alb}_X:X\longrightarrow {\rm Alb}(X)$ be the Albanese map of $X$. We say that $X$ is of \emph{maximal Albanese 
dimension} if $\ddim {\rm  alb}_X(X)=\ddim X$. 
Following Catanese's terminology we say that $X$ is of \emph{Albanese general type} if $q(X)> \ddim X$ and if it is 
of maximal Albanese dimension. 
A \emph{higher irrational pencil} is a surjective map with connected fibers
$f:X\longrightarrow Y$ onto a normal lower dimensional variety $Y$ and such that any smooth model of $Y$ is of Albanese general type.
\begin{lemma}\label{pencil}
If $X$ is an irregular compact K\"{a}hler manifold with $m(X)=\ddim X$, then $X$ does not carry
any higher irrational pencils.
\end{lemma}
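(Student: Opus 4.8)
The plan is to argue by contradiction: if $X$ carried a higher irrational pencil $f\colon X\to Y$, I would produce a nonzero holomorphic $1$-form on $X$ whose zero locus has positive dimension, contradicting $m(X)=\ddim X$.

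First I would pass to a smooth model of the base. Let $\mu\colon Y'\to Y$ be a resolution of singularities; by hypothesis $Y'$ is of Albanese general type, so the Albanese map $a\colon Y'\to A:={\rm Alb}(Y')$ has $\ddim a(Y')=\ddim Y'=:n$, while $\ddim A=q(Y')>n$. Write $Z:=a(Y')$, an irreducible subvariety of $A$ which generates $A$. The composite of the dominant rational map $\mu^{-1}\circ f\colon X\dashrightarrow Y'$ with $a$ extends to a morphism $\phi\colon X\to A$, since rational maps from a smooth compact manifold to a complex torus are everywhere defined. Then $\phi(X)$ is closed and dense in $Z$, hence $\phi(X)=Z$; and since $Y$ is lower dimensional we have $n<d$, so the surjective morphism $\phi\colon X\to Z$ has every fibre of dimension $\ge d-n\ge 1$.

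Next I would build the form. Fix a smooth point $z_0\in Z$. Since $\operatorname{codim}_A Z=\ddim A-n\ge 1$, the space of holomorphic $1$-forms on $A$ that annihilate the subspace $T_{z_0}Z\subseteq T_{z_0}A$ has positive dimension; choose a nonzero such form $\alpha$ and set $\omega:=\phi^*\alpha\in H^0(X,\Omega_X^1)$. On the one hand $\omega\neq 0$: were $\phi^*\alpha$ identically zero, $\alpha$ would restrict to the zero $1$-form on $Z$, which forces $Z$ into a translate of a proper subtorus of $A$ (the kernel of the $1$-dimensional quotient of $A$ from which $\alpha$ is pulled back), contradicting the fact that $Z$ generates $A$. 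On the other hand, for every $x\in\phi^{-1}(z_0)$ the differential $d\phi_x$ maps $T_xX$ into $T_{z_0}Z$, because $\phi$ maps into $Z$ and $z_0$ is a smooth point of $Z$; hence $\omega(x)=\alpha(z_0)\circ d\phi_x=0$. Thus $Z(\omega)\supseteq\phi^{-1}(z_0)$, so $\operatorname{codim}_X Z(\omega)\le n<d$, giving $m(X)\le n<d$ --- a contradiction.

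The points requiring care are the bookkeeping ones: that $\mu^{-1}\circ f$ followed by $a$ really extends across all of $X$ (standard, the target being a torus), that $\phi(X)$ is all of $Z$, and --- the genuine point --- that $\omega=\phi^*\alpha$ is not identically zero, which is where the hypotheses ``$\ddim a(Y')=\ddim Y'$'' and ``$q(Y')>\ddim Y'$'' enter, via $\operatorname{codim}_A Z\ge 1$ together with $Z$ generating $A$. The main conceptual subtlety, I expect, is seeing why one must pull back all the way to $X$: the map $a\colon Y'\to Z$ is only generically finite, so $\alpha$ may have finite zero locus on $Y'$ itself; it is the strict inequality $\ddim X>\ddim Z$ that makes the fibres of $\phi$, and therefore $Z(\omega)$, positive-dimensional.
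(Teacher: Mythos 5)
Your argument is correct and is essentially the paper's own proof: assuming a higher irrational pencil $f\colon X\to Y$ exists, one takes a smooth point of the (positive-codimension) Albanese image of a smooth model of $Y$, chooses a holomorphic $1$-form on the Albanese torus annihilating the tangent space of the image there, and pulls it back to $X$ to get a nonzero $1$-form vanishing along positive-dimensional fibers of $f$, contradicting $m(X)=\dim X$; your extra care about extending the map to all of $X$ and about the non-vanishing of the pullback only makes explicit what the paper leaves implicit. The one small inaccuracy is your parenthetical reason for $\phi^*\alpha\neq 0$: a holomorphic $1$-form on a complex torus need not be pulled back from a $1$-dimensional quotient torus (the associated linear functional need not be compatible with the lattice), but the fact you need is standard and follows, for instance, from the injectivity of $a^*$ on holomorphic $1$-forms for the Albanese map of $Y'$.
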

\begin{proof}
We proceed by contradiction. Suppose a higher irrational pencil $f:X\longrightarrow Y$ exists and let
${\rm alb}_Y:Y\longrightarrow {\rm Alb}(Y)$ be the Albanese map of $Y$, which is well defined since $Y$
is normal.
The map ${\rm alb}_Y$ is not surjective, hence 
following an idea 
contained in the proof of \cite{EL} Proposition 2.2, one can show that given a general point $y\in Y$ there exists a 
holomorphic $1$-form $\omega$ of ${\rm Alb}(Y)$ whose restriction to ${\rm alb}_Y(Y)$ vanishes at ${\rm alb}_Y(y)$.
Pulling back $\omega$ to $X$, we get a holomorphic $1$-form which vanishes
along some fibers of $f$ which are of positive dimension, this contradicting the hypothesis $m(X)=\ddim X$. 
The form $\omega$ can be constructed as follows.
Let $z$ be a smooth point of the Albanese image ${\rm alb}_{Y}(Y)\subset {\rm Alb}(Y)$. The coderivative 
map $T_z^* {\rm Alb}(Y)\longrightarrow T_z^* {\rm alb_{Y}(Y})$ is surjective with non trivial kernel. 
Then take $\omega$ to be the extension to a holomorphic $1$-form on ${\rm Alb}(Y)$ of any non-zero form belonging to this kernel.
\end{proof}

% \begin{proof}
%  We proceed by contradiction. Let 
% $f:X\longrightarrow Y$ be a higher irrational pencil, eventually by replacing $Y$ by a smooth model, we can suppose $Y$ smooth.
% Then if $m(Y)<\infty$, by pulling back a holomorphic $1$-form 
% with zeros, we would have a holomorphic $1$-form on $X$ whose zero set would contain some positive 
% dimensional fibers of $f$, this contradicts
% the assumption $m(X)=d$. Therefore it must be $m(Y)=\infty$, and since $Y$ is of Albanese general type, there exist at least $\ddim Y$ 
% holomorphic $1$-forms which never vanish, thus $\Omega_Y^1$ must be the trivial bundle of rank $\ddim Y$ and again we reach 
% a contradiction since the
% irregularity of $Y$ has to be strictly greater than its dimension. 
% \end{proof}
In the following Proposition we collect inequalities for Hodge numbers in the case $m(X)=\ddim X$, which  
will be used 
to give asymptotic bounds
for Hodge numbers in terms of the irregularity 
for manifolds of dimension three and four (\emph{cf.} Corollary \ref{3folds} and Corollary \ref{4folds}). 
These are essentially extracted from \cite{LP} Remark 3.3 and the references therein together with Lemma 3.3.
\begin{prop}\label{m=d}
 Let $X$ be an irregular compact K\"{a}hler manifold with $m(X)=\ddim X$. Then
\begin{eqnarray}\label{catanese}
h^{0,k}\geq k(q(X)-k)+1  
\end{eqnarray}
for any $k=0,\ldots ,\ddim X$.
If $\ddim X\geq 3$ then
\begin{eqnarray}\label{voisin}
 h^{0,2}\geq 4q(X)-10.
\end{eqnarray}
If $q(X)\geq \ddim X$, and for any value of $\ddim X$, then
\begin{eqnarray}\label{chi}
 \chi (\omega_X)\geq q(X)-\ddim X.
\end{eqnarray}
\end{prop}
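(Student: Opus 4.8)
The plan is to derive all three inequalities by combining Lemma \ref{pencil} with results already available in the literature. The key point is that, by Lemma \ref{pencil}, the hypothesis $m(X)=\ddim X$ forces $X$ to carry no higher irrational pencil; in particular $X$ admits no fibration with connected fibers onto a smooth curve of genus $\geq 2$, such a curve being of Albanese general type. Granting this, \eqref{catanese} is immediate from Catanese's higher-dimensional Castelnuovo--De Franchis inequality \cite{CAT}, which asserts precisely that a compact K\"ahler manifold carrying no higher irrational pencil satisfies $h^{0,k}\geq k(q(X)-k)+1$ for every $k=0,\dots,\ddim X$. Likewise, when $\ddim X\geq 3$, inequality \eqref{voisin} follows from the sharper bound $h^{0,2}\geq 4q(X)-10$, valid for compact K\"ahler manifolds of dimension at least three that carry no fibration onto a curve of genus $\geq 2$; this is the statement recorded, with its attribution and references, in \cite{LP} Remark 3.3.

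The only point requiring a separate argument is that \eqref{chi} rests on $X$ being of maximal Albanese dimension, which I would extract from $m(X)=\ddim X$ together with $q(X)\geq \ddim X$ as follows. Suppose this fails, so that $r:=\ddim {\rm alb}_X(X)<d:=\ddim X$, and pick a general fiber $F$ of ${\rm alb}_X$, which has dimension $d-r>0$ and is contracted by ${\rm alb}_X$ to a general, hence smooth, point $z$ of ${\rm alb}_X(X)\subset {\rm Alb}(X)$. Since $r<d\leq q=\ddim {\rm Alb}(X)$, there is a nonzero translation-invariant holomorphic $1$-form $\eta$ on ${\rm Alb}(X)$ whose value at $z$ annihilates the $r$-dimensional tangent space $T_z\,{\rm alb}_X(X)$. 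Then the nonzero pulled-back form ${\rm alb}_X^{*}\eta\in H^0(X,\Omega_X^1)$ vanishes at every point of $F$ at which ${\rm alb}_X$ is submersive onto its image, hence on all of $F$ because its zero locus is closed, so that $\codim Z({\rm alb}_X^{*}\eta)\leq r<d$, contradicting $m(X)=d$. With $X$ now of maximal Albanese dimension and still carrying no higher irrational pencil, Theorem A of \cite{GSV} applies and yields $\chi(\omega_X)\geq q(X)-\ddim X$, which is exactly \eqref{chi}.

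Thus the proof is largely an assembly of cited theorems through Lemma \ref{pencil}, and the single non-citational ingredient---the reduction to maximal Albanese dimension---is a routine Green--Lazarsfeld-type computation. I expect the main thing to watch to be that the notion of higher irrational pencil excluded by Lemma \ref{pencil} matches exactly the hypotheses demanded by \cite{CAT}, by the bound recorded in \cite{LP} Remark 3.3, and by \cite{GSV} Theorem A; verifying this compatibility of definitions is really the only delicate part.
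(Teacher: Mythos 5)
Your proposal is correct, and its overall skeleton (everything funnels through Lemma \ref{pencil}) matches the paper's, but two steps are handled by a genuinely different route. For \eqref{catanese} you quote Catanese's theorem as a black box, whereas the paper re-derives the bound: it uses Lemma \ref{pencil} together with Theorem 1.14 and Lemma 2.20 of \cite{CAT} to show $\omega_1\wedge\ldots\wedge\omega_k\neq 0$ for independent $1$-forms, i.e. injectivity of $\phi_k:\bigwedge^k H^1(X,\sO_X)\to H^k(X,\sO_X)$ on decomposable classes, and then compares dimensions with the cone over the Pl\"ucker-embedded Grassmannian; your shortcut is legitimate since the paper itself attributes exactly this statement to \cite{CAT} in the introduction. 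The more substantial divergence is in \eqref{chi}: the paper deduces maximal Albanese dimension from generic vanishing (by \cite{GL1}, $m(X)=\ddim X$ forces the loci $V^i(\omega_X)$ to be finite, and then \cite{LP} Remark 1.4 or \cite{BLNP} Proposition 2.7 applies), and then invokes \cite{GSV} Corollary 4.2 or \cite{LP} Theorem 3.1; you instead prove maximal Albanese dimension directly, by producing, when $r=\ddim{\rm alb}_X(X)<d\leq q$, a nonzero $1$-form $\eta$ on ${\rm Alb}(X)$ killing $T_z\,{\rm alb}_X(X)$ at a general smooth point $z$, so that ${\rm alb}_X^*\eta$ vanishes along the positive-dimensional fiber over $z$ and contradicts $m(X)=d$, and then you apply \cite{GSV} Theorem A (maximal Albanese dimension plus no higher irrational pencil, both now available). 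Your argument is correct --- the needed inequality $r<q$ comes from $q\geq d$, the pullback is nonzero since ${\rm alb}_X^*$ is injective on $1$-forms, and the image of $d\,{\rm alb}_X$ lies in $T_z\,{\rm alb}_X(X)$ at the relevant points --- and it is more elementary and self-contained than the paper's generic-vanishing citation chain; it is also in the same spirit as the paper's own proof of Lemma \ref{pencil}, so it fits naturally. For \eqref{voisin} both you and the paper defer to \cite{LP} Remark 3.3; note only that the paper says ``apply the same argument,'' signalling that the hypothesis there (no irregular fibrations) is not verbatim the conclusion of Lemma \ref{pencil}, so the definitional-compatibility check you flag (a fibration onto a curve of genus $\geq 2$ is a higher irrational pencil, and the Castelnuovo--De Franchis mechanism only needs that case) is indeed the point to spell out.
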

\begin{proof}
One can show that if $\omega_1,\ldots ,\omega_{k}$ are linearly independent holomorphic $1$-forms  
then $\omega_1\wedge \ldots \wedge \omega_{k}\neq 0$. This can be done by 
induction on $k$ and by using the above Lemma \ref{pencil}, 
Theorem 1.14 and Lemma 2.20 in \cite{CAT}. 
This fact can be reformulated as saying that the natural maps
$$\phi_k:\bigwedge ^k H^1(X,\sO_X)\longrightarrow H^k(X,\sO_X)$$ are injective on primitive forms $\omega_1\wedge \ldots \wedge \omega_k$.
The set of such forms is the cone over the image of the Pl\"{u}cker embedding, i.e. over the Grassmannian $\gggg(k,H^1(X,\sO_X))$, 
and by comparing the dimensions we have the bounds.
For the inequality \eqref{voisin} we can apply the same argument used in \cite{LP} Remark 3.3 and for the inequality \eqref{chi} we note that 
when $q(X)\geq \ddim X$ then $X$ is of maximal Albanese dimension. In fact by \cite{GL1} Remark on p. 405, the cohomological support loci
$V^i(\omega_X)=\{\alpha\in {\rm Pic}^0(X)\; |\; H^i(X,\omega_X\otimes \alpha)\neq 0\}$ consist of at most a finite set of points and, 
by \cite{LP} Remark 1.4 or by \cite{BLNP} Proposition 2.7, $X$ is of maximal Albanese dimension. Now the inequality 
$\chi (\omega_X)\geq q(X)-\ddim X$ follows by \cite{GSV} Corollary 4.2 or by \cite{LP} Theorem 3.1. 
\end{proof}

\section{Examples and asymptotic bounds for threefolds and fourfolds}
In this section we list concrete inequalities coming from Theorems \ref{schur} and \ref{schur2} in the 
most interesting case $m(X)=\ddim X$, $q(X)\geq \ddim X$, and for $\ddim X=3,4,5$. Moreover 
for threefolds and fourfolds we list asymptotic bounds in terms of the irregularity $q(X)$ for all the Hodge numbers. We also point out 
that some of the inequalities are still valid for some values of $q(X)$ smaller than $\ddim X$ and 
that other inequalities hold for different values of $m(X)$. Set $q=q(X)$ for the irregularity and 
$h^{p,q}=h^{p,q}(X)$ for the Hodge numbers.

Let us start with Theorem \ref{schur}.
We get a first set of inequalities by imposing the conditions $\gamma_1(X,\Omega_X^k)\geq 0$ for $k=0,1,2$. 
Hence 
\begin{align*}
 h^{0,2}\geq 2q-3, \quad &  h^{1,1}\geq 2q  & \mbox{ for } \ddim X=3\\
 h^{1,2}\geq 2h^{1,1}-3q,\quad &  h^{1,2}\geq 2h^{0,2},  \quad   h^{0,3}\geq 2h^{0,2}-3q+4  & \mbox{ for } \ddim X=4\\
 h^{0,4}\geq 4q-3h^{0,2}+2h^{0,3}-5, \quad & h^{1,4}\geq 4h^{1,1}-3h^{1,2}+2h^{1,3},\quad h^{2,2}\geq 2h^{1,2}-3h^{0,2} & \mbox{ for } 
\ddim X=5
\end{align*}

Finer inequalities are obtained by solving $\gamma_2(X,\Omega_X^k)\geq 0$. Then for $\ddim X=3$ we have
\begin{align}\label{3foldsecond}
h^{0,2}\geq 2q-\frac{7}{2}+\frac{\sqrt{8q-23}}{2}, \quad  h^{1,1}\geq 2q-\frac{1}{2}+\frac{\sqrt{8q+1}}{2}
\end{align}
and for $\ddim X=4$ we get 
\begin{eqnarray}
h^{0,3} & \geq & 2h^{0,2}-3q+\frac{7}{2}+\frac{\sqrt{8h^{0,2}-24q+49}}{2}\label{4foldsecond03}\\
h^{1,2} & \geq & 2h^{1,1}-3q+\sqrt{4h^{1,1}-9q}\\
h^{1,2}& \geq & 2h^{0,2}-\frac{1}{2}+\frac{\sqrt{8h^{0,2}+1}}{2}\label{4foldsecond12}
\end{eqnarray}
where the quantity $4h^{1,1}-9q$ is non-negative by the second inequality of Theorem \ref{schur2} (i) and the 
quantity $8h^{0,2}-24q+49$ is non-negative by inequality \eqref{voisin}.
Finally for $\ddim X=5$ we get
\begin{eqnarray*}
h^{0,4}&\geq &4q-3h^{0,2}+2h^{0,3}-\frac{11}{2}+\frac{\sqrt{48q-24h^{0,2}+8h^{0,3}-79}}{2}\\
h^{1,4}&\geq &2h^{1,3}+4h^{1,1}-3h^{1,2}-\frac{1}{2}+\frac{\sqrt{48h^{1,1}-24h^{1,2}+8h^{1,3}+1}}{2}\\
h^{2,2}&\geq &2h^{1,2}-3h^{0,2}-\frac{1}{2}+\frac{\sqrt{8h^{1,2}-24h^{0,2}+1}}{2}\\
\end{eqnarray*}
which hold as long as the quantity under the square root are non-negative.

Applying Theorem \ref{schur2} with $m(X)=\ddim X$ and $q(X)\geq \ddim X$, we get for $\ddim X=3$
$$\chi(\omega_X)\geq q-3,\quad  h^{1,1}\geq 2q-1, \quad  h^{1,2}\geq h^{1,1}-2, \quad  h^{1,2}\geq h^{0,2}+q-1,$$
for $\ddim X=4$
\begin{eqnarray*}
\chi (\omega_X)\geq q-4, & h^{2,2}\geq h^{1,2}-h^{0,2}+q-2, & h^{1,3}\geq h^{1,2}-h^{1,1}+2q-3\\
h^{1,1}\geq 2q-1, & h^{1,2}\geq h^{2,0}+q-1, & h^{1,3}\geq h^{0,3}+q-1
\end{eqnarray*}
and for $\ddim X=5$
\begin{eqnarray*}
\chi (\omega_X)\geq q-5,& h^{1,4}\geq h^{1,3}-h^{1,2}+h^{1,1}-4, & h^{1,1}\geq 2q-1,\\
2h^{1,2}\geq h^{2,2}+h^{0,2}+q-3,& h^{1,2}\geq h^{0,2}+q-1,& h^{1,2}\geq h^{1,3}-h^{0,3}+q-2,\\
h^{1,3}\geq h^{0,3}+q-1, & h^{1,4}\geq h^{0,4}+q-1.
\end{eqnarray*}

We select the strongest of the inequalities above in dimension three and four in statements formulated asymptotically for simplicity:
\begin{cor}\label{3folds}
 Let $X$ be an irregular compact K\"{a}hler threefold with $m(X)=3$. Then asymptotically
$$h^{0,2}\succeq 4q,\quad h^{0,3}\succeq 4q, \quad h^{1,1}\succeq 2q+\sqrt{2q},\quad h^{1,2}\succeq 5q+\sqrt{2q}.$$
\end{cor}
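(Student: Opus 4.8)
The plan is to assemble the explicit inequalities recorded above for $d=3$, $m(X)=3$ and to retain, for each of the four Hodge numbers, the strongest estimate. Since $m(X)=3$, Lemma \ref{pencil} guarantees that $X$ carries no higher irrational pencil, so Proposition \ref{m=d} applies; and since the statement is asymptotic we may assume $q\geq 3$, so in particular \eqref{chi} gives $\chi(\omega_X)\geq q-3$. Thus the three inputs I would use are: Proposition \ref{m=d} (which yields Catanese's inequality $h^{0,k}\geq k(q-k)+1$, the bound \eqref{voisin} $h^{0,2}\geq 4q-10$, and $\chi(\omega_X)\geq q-3$); Theorem \ref{schur} applied with $d=3$, $m=3$ and $p=0,1,2,3$; and Theorem \ref{schur2} applied with $d=3$, $m=3$.

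First I would unwind Theorem \ref{schur}. By Hodge symmetry and Serre duality the power series $\gamma(X,\Omega_X^p;t)$ and $\delta(X,\Omega_X^p;t)$ attached to the four values of $p$ reduce to just $(1-t)^{-h^{0,2}}$, $(1-t)^{-h^{1,1}}(1-2t)^{q}$ and $(1-t)^{-h^{0,2}}(1-2t)^{q}(1-3t)^{-1}$. Nonnegativity of their degree-one coefficients returns $h^{0,2}\geq 2q-3$ and $h^{1,1}\geq 2q$, and nonnegativity of their degree-two coefficients, after solving the resulting quadratic inequalities, returns exactly \eqref{3foldsecond},
\[
h^{0,2}\;\geq\; 2q-\tfrac{7}{2}+\tfrac{\sqrt{8q-23}}{2},\qquad h^{1,1}\;\geq\; 2q-\tfrac{1}{2}+\tfrac{\sqrt{8q+1}}{2}.
\]
Next I would unwind Theorem \ref{schur2}: the surjectivity of the last map of $\underline{\llll}_X^p$ forces $h^3(X,\Omega_X^p)\neq 0$, and the Evans--Griffith rank estimates then give $h^{1,1}\geq 2q-1$, $h^{1,2}\geq h^{1,1}-2$, and (reading the rank of the kernel sheaf for $p=1$ and using $h^{1,3}=h^{0,2}$ from Serre duality) $h^{1,2}\geq h^{0,2}+q-1$.

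It then remains to combine. The estimate $h^{0,2}\succeq 4q$ is immediate from \eqref{voisin}. Writing $\chi(\omega_X)=h^{0,3}-h^{0,2}+q-1$ and using $\chi(\omega_X)\geq q-3$ yields $h^{0,3}\geq h^{0,2}-2$, so \eqref{voisin} again gives $h^{0,3}\succeq 4q$. The estimate $h^{1,1}\succeq 2q+\sqrt{2q}$ follows from \eqref{3foldsecond}, since $\tfrac{1}{2}\sqrt{8q+1}\geq\sqrt{2q}$. For $h^{1,2}$ the relevant inequalities are $h^{1,2}\geq h^{1,1}-2$ and $h^{1,2}\geq h^{0,2}+q-1$, and feeding into the latter the combined strength of \eqref{voisin} and \eqref{3foldsecond} for $h^{0,2}$ gives $h^{1,2}\succeq 5q+\sqrt{2q}$. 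I expect the only genuine difficulty to be bookkeeping: checking that each application of Theorems \ref{schur} and \ref{schur2} meets its numerical hypothesis $q>\max\{\dots\}$ (automatic once $q$ is large), solving the quadratic inequalities coming from the degree-two Chern classes, and---the most delicate point---carrying the square-root-order refinements through the combinations, in particular squeezing the $\sqrt{2q}$ out of the interplay between $h^{1,2}\geq h^{0,2}+q-1$ and the lower bounds for $h^{0,2}$.
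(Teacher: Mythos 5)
Your derivations of the first three bounds match the paper's: $h^{0,2}\succeq 4q$ from \eqref{voisin}, $h^{0,3}\succeq 4q$ from $\chi(\omega_X)\geq q-3$ rewritten as $h^{0,3}\geq h^{0,2}-2$, and $h^{1,1}\succeq 2q+\sqrt{2q}$ from the second inequality of \eqref{3foldsecond}; your bookkeeping of the series $\gamma$, $\delta$ for $p=0,1,2,3$ is also correct.

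The argument for $h^{1,2}$, however, has a genuine gap. The two inequalities you propose to use, $h^{1,2}\geq h^{1,1}-2$ and $h^{1,2}\geq h^{0,2}+q-1$, give asymptotically only $h^{1,2}\succeq 2q+\sqrt{2q}$ and $h^{1,2}\succeq 5q$ respectively. Your plan to ``feed in the combined strength of \eqref{voisin} and \eqref{3foldsecond} for $h^{0,2}$'' cannot produce the missing $\sqrt{2q}$: two lower bounds for the same quantity $h^{0,2}$ can only be used via their maximum, which is $4q-10$ for large $q$ (the refined bound $2q-\tfrac{7}{2}+\tfrac{1}{2}\sqrt{8q-23}$ is dominated by $4q-10$ and contributes nothing), so this route stalls at $h^{1,2}\succeq 5q$. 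The ingredient you are missing is Corollary \ref{cor}: since $m(X)=3$ and $q>3$ (harmless in an asymptotic statement), one has $\chi(\Omega_X^1)\geq 2$, i.e.\ $h^{1,2}\geq h^{1,1}+h^{1,3}-h^{1,0}+2=h^{1,1}+h^{0,2}-q+2$ using $h^{1,3}=h^{0,2}$. This is the inequality that lets the $\sqrt{2q}$ enter through $h^{1,1}\succeq 2q+\sqrt{2q}$ while $h^{0,2}\succeq 4q$ supplies the linear part, giving $h^{1,2}\succeq (2q+\sqrt{2q})+4q-q=5q+\sqrt{2q}$, which is exactly how the paper concludes.
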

\begin{proof}
The inequality \eqref{voisin} gives $h^{0,2}\succeq 4q$. The inequality $\chi(\omega_X)\geq q-3$ 
of Theorem \ref{schur2} implies the inequality
$h^{0,3}\geq h^{0,2}-2$ and therefore asymptotically $h^{0,3}\succeq 4q$.
The asymptotic bound for $h^{1,1}$ follows by \eqref{3foldsecond}. Since by Corollary \ref{cor} $\chi(\Omega_X^1)\geq 2$ 
we also get the bound for $h^{1,2}$.picasa for linux
\end{proof}
\begin{cor}\label{4folds}
Let $X$ be an irregular compact K\"{a}hler fourfold with $m(X)=4$. Then asymptotically
$$h^{0,2}\succeq 4q, \quad h^{0,3}\succeq 5q+\sqrt{2q},\quad h^{0,4}\succeq 4q$$
$$h^{1,1}\succeq 2q,\quad h^{1,2}\succeq 8q+2\sqrt{2q},\quad h^{1,3}\succeq 12q+3\sqrt{2q},\quad h^{2,2}\succeq 8q+4\sqrt{2q}.$$
\end{cor}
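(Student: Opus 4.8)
The plan is to carry out for fourfolds the same \emph{bootstrapping} as in the proof of Corollary \ref{3folds}: for each of the seven independent Hodge numbers $h^{0,2},h^{0,3},h^{0,4},h^{1,1},h^{1,2},h^{1,3},h^{2,2}$ I would pick the strongest of the inequalities collected above and, whenever that inequality also involves other Hodge numbers, substitute into it the asymptotic bounds already obtained for those. Throughout I would use the Hodge symmetries $h^{p,q}=h^{q,p}=h^{d-p,d-q}$ to rewrite the Euler characteristics $\chi(\Omega_X^p)$ purely in terms of the seven numbers above. Since $m(X)=4$ and $q$ is taken large, all hypotheses of Proposition \ref{m=d}, Theorem \ref{schur2} and Corollary \ref{cor} are met.

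First I would settle the three numbers bounded outright: \eqref{voisin} gives $h^{0,2}\ge 4q-10$, so $h^{0,2}\succeq 4q$; \eqref{catanese} with $k=4$ gives $h^{0,4}\ge 4(q-4)+1$, so $h^{0,4}\succeq 4q$; and the second inequality of Theorem \ref{schur2}(ii) with $p=1$ (using $h^{1,0}=q$) gives $h^{1,1}\ge h^{1,0}+q-1=2q-1$, so $h^{1,1}\succeq 2q$. Next I would feed $h^{0,2}\succeq 4q$ into the two quadratic inequalities of this section: the radicand in \eqref{4foldsecond03} becomes $\succeq 8q$, yielding $h^{0,3}\succeq 5q+\sqrt{2q}$, and the radicand in \eqref{4foldsecond12} becomes $\succeq 32q$, yielding $h^{1,2}\succeq 8q+2\sqrt{2q}$; in both cases the right-hand side is non-decreasing in $h^{0,2}$, so using the lower bound $4q$ is legitimate.

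The remaining two bounds would come from Corollary \ref{cor}. Expanding $\chi(\Omega_X^2)=\sum_j(-1)^jh^{2,j}$ and using $h^{2,0}=h^{2,4}=h^{0,2}$, $h^{2,1}=h^{2,3}=h^{1,2}$, one gets $\chi(\Omega_X^2)=2h^{0,2}-2h^{1,2}+h^{2,2}$, so $(-1)^{d-p}\chi(\Omega_X^2)=\chi(\Omega_X^2)\ge 1$ reads $h^{2,2}\ge 2h^{1,2}-2h^{0,2}+1$; plugging in the (monotone) bound of \eqref{4foldsecond12} and then $h^{0,2}\succeq 4q$ gives $h^{2,2}\succeq 2h^{0,2}+\sqrt{8h^{0,2}}\succeq 8q+4\sqrt{2q}$. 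Likewise, using $h^{1,0}=q$ and $h^{1,4}=h^{3,0}=h^{0,3}$ one has $\chi(\Omega_X^1)=q-h^{1,1}+h^{1,2}-h^{1,3}+h^{0,3}$, so the inequality $(-1)^{d-1}\chi(\Omega_X^1)\ge 2$ becomes $h^{1,3}\ge q-h^{1,1}+h^{1,2}+h^{0,3}+2$, into which substituting $h^{1,1}\succeq 2q$, $h^{1,2}\succeq 8q+2\sqrt{2q}$ and $h^{0,3}\succeq 5q+\sqrt{2q}$ gives $h^{1,3}\succeq 12q+3\sqrt{2q}$.

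I expect the arithmetic to be routine; the step requiring the most care is the last one, namely correctly re-expressing $\chi(\Omega_X^1)$ and $\chi(\Omega_X^2)$ via the Hodge symmetries and, in each substitution of the chain, paying attention to the sign with which every Hodge number enters the inequality being used, so that the already-established asymptotic values can be fed in consistently.
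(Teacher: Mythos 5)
Your proposal is correct and follows essentially the same route as the paper's proof: \eqref{voisin}, \eqref{catanese}, \eqref{4foldsecond03}, \eqref{4foldsecond12} and the inequality $h^{1,1}\geq 2q-1$ from Theorem \ref{schur2} give the bounds for $h^{0,2},h^{0,4},h^{0,3},h^{1,2},h^{1,1}$, and the last two bounds come from Corollary \ref{cor} after rewriting $\chi(\Omega_X^1)$ and $\chi(\Omega_X^2)$ via Hodge--Serre symmetry, exactly as the paper does. The only differences are expository: you spell out the expansions $\chi(\Omega_X^2)=2h^{0,2}-2h^{1,2}+h^{2,2}$ and $\chi(\Omega_X^1)=q-h^{1,1}+h^{1,2}-h^{1,3}+h^{0,3}$ and the order of substitution that the paper leaves implicit (including the step where the lower bound $h^{1,1}\succeq 2q$ is inserted into a negatively-signed term in the $h^{1,3}$ estimate, a liberty the paper's own computation takes as well), and you quote part (ii) of Theorem \ref{schur2} instead of part (i) for $h^{1,1}$, which yields the same inequality.
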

\begin{proof}
The asymptotic bounds for $h^{0,2},h^{0,3}$ and $h^{0,4}$ follow by \eqref{voisin}, \eqref{4foldsecond03}
and \eqref{catanese} respectively.
Using the second inequality of Theorem \ref{schur2} (i) we get $h^{1,1}\succeq 2q$, and by inequality \eqref{4foldsecond12}
we get the bound for $h^{1,2}$. By Corollary \ref{cor} we have $\chi(\Omega_X^1)\leq 2$ and 
$\chi(\Omega_X^2)\geq 1$ which imply the bounds for $h^{1,3}$ and $h^{2,2}$.
\end{proof}

\section{Regularity of the cohomology modules} 
In this section we give the proof of Theorem \ref{thm-intr1} from the Introduction. 
% We start by recalling the Bernstein-Gel'fand-Gel'fand (BGG) correspondence and by showing how it can be used to bound the regularity 
% of finitely generated graded modules over an exterior algebra.
% \subsection{BGG correspondence}
% Let $V$ be a $q$-dimensional complex vector space with basis $e_1,\ldots ,e_q$ and $W=V^{\vee}$ be the dual vector space with 
% dual basis $x_1,\ldots, x_q$. Let $E=\oplus_{i=0}^q \wedge ^i V$ be the exterior algebra over $V$
% and $S=\mbox{Sym}(W)$ be the symmetric algebra over $W$. We declare that elements of $W$ are of degree $1$ and elements of $V$ of degree $-1$.
% 
% Let $P=\oplus_i P_i$ be a finitely generated graded $E$-module. 
% The BGG correspondence associates to $P$ the complex $\mathbf{L}(P)$ of free graded modules over the symmetric 
% algebra $S$ in cohomological degree $0$ to $d$, given by
% $$\ldots \longrightarrow S\otimes_{\cc}P_{j+1}\longrightarrow S\otimes _{\cc}P_j \longrightarrow S\otimes _{\cc}P_{j-1}\longrightarrow \ldots$$
% with differentials $$s\otimes p\mapsto \sum _i x_is\otimes e_ip$$ 
% (cf. \cite{BGG}, \cite{EFS}, \cite{LP}, \cite{E} and \cite{CO}). We refer to this complex as the \emph{BGG complex associated to the module} $P$.
% 
% There exists a notion of regularity
% for graded finitely generated $E$-modules analogous to the Castelnuovo-Mumford regularity for graded 
% finitely generated $S$-modules. 

Let $V$ be a complex vector space and $E=\bigwedge^* V$ be the graded exterior algebra over $V$.
A finitely generated graded $E$-module $P=\bigoplus_{j\geq 0} P_j$ is called $m$\emph{-regular} if it is generated in degrees $0$ up
to $-m$, and if its minimal free resolution has at most $m+1$ linear strands. Equivalently, $P$ is $m$-regular if and only if 
${\rm Tor}_i^E(P,\cc)_{-i-j}=0$ for all $i\geq 0$ and all $j\geq m+1$. 
The \emph{dual} over $E$ of the module $P$ is defined to be the $E$-module
$Q=\hat{P}=\oplus_j P_{-j}^*$ (\emph{cf.} \cite{E}, \cite{EFS}, \cite{LP}).
% (so positive degrees are switched to negative ones and viceversa).

% The following Proposition is the bridge between the notion of regularity and the BGG correspondence (cf. \cite{E} Theorems 7.7-7.8, \cite{EFS} 
% Corollary 2.5 and \cite{LP} Proposition 2.2).
% \begin{prop}\label{BGGreg}
% Let $P$ be a finitely generated graded module over $E$ with no component of negative degree, say $P=\oplus_{i=0}^d P_i$. 
% Then $Q=\hat{P}$ is $m$-regular if and only if the complex $\mathbf{L}(P)$ is exact at the first $d-m$ steps from the left, 
% i.e. if and only if the sequence
% $$0\longrightarrow S\otimes_{\cc}P_d\longrightarrow S\otimes_{\cc}P_{d-1}\longrightarrow \ldots \longrightarrow S\otimes_{\cc}P_m$$ of $S$-modules 
% is exact.
% \end{prop}
% 
% \subsection{Regularity of cohomology modules of holomorphic p-forms}
We continue to denote by $X$ an irregular compact K\"{a}hler manifold of dimension $d$ and irregularity $q$. Set
$V=H^1(X,\sO_X)$ and $E=\bigwedge ^* V$.
In \cite{LP} the authors determined the regularity of the graded $E$-module $Q_X=\bigoplus_i H^i(X,\omega_X)$ by studying the 
exactness of the complex associated to its dual module $P_X=\bigoplus_i H^i(X,\sO_X)$ via the BGG correspondence.
Fore references on the BGG correspondence see \cite{BGG}, \cite{EFS} and Chapter 7B of \cite{E}.
By applying their same technique and by using Generic 
Vanishing Theorems for 
bundles of holomorphic $p$-forms (see \cite{GV} and \cite{CH}) 
we give a bound for the regularity of the $E$-modules $\bigoplus_i H^i(X,\Omega_X^p)$ for any $p$.

Fix an integer $p=0,\ldots ,d$. Via cup product consider the graded $E$-module
$$P_X^p=\bigoplus_i H^i(X,\Omega_X^{d-p})$$ where the graded piece $H^i(X,\Omega_X^{d-p})$ has degree $d-i$.
The dual module over $E$ of $P_X^p$ is the module
$$Q_X^p=\bigoplus_i H^i(X,\Omega_X^p)$$ 
where the graded piece $H^i(X,\Omega_X^p)$ has degree $-i$.
% The BGG complex associated to the module $P_X^p$ is the complex of $S$-modules in cohomological degree $0$ to $d$
% $$\mathbf{L}(P_X^p): \quad 0\longrightarrow S\otimes_{\cc} H^0(X,\Omega_X^p)\longrightarrow 
% S\otimes_{\cc} H^1(X,\Omega_X^p)\longrightarrow \ldots \longrightarrow S\otimes_{\cc} H^d(X,\Omega_X^p)\longrightarrow 0.$$
% Writing $\mathbf{P}=\pp^{q-1}=\mathbf{P}_{sub}(V)$, 
% the complex $\mathbf{L}(P_X^p)$ sheafifies to yield a linear complex
% $\underline{\mathbf{L}}(P_X^p)$ of locally free sheaves on $\mathbf{P}$:
% \begin{eqnarray}\label{BGG2}
% \underline{\mathbf{L}}(P_X^p): \, 0\longrightarrow \sO_{\pp}(-d)\otimes H^0(X,\Omega_X^p)\longrightarrow \sO_{\pp}(-d+1)\otimes H^1(X,\Omega_X^p)
% \longrightarrow \ldots
% \end{eqnarray}
% $$\ldots \longrightarrow \sO_{\pp}(-1)\otimes H^{d-1}(X,\Omega_X^p)\longrightarrow \sO_{\pp}\otimes H^d(X,\Omega_X^p)\longrightarrow 0.$$

Let $W=V^{*}$ be the dual vector spapicasa for linuxce of $V$ and $S={\rm Sym}(W)$ be the symmetric algebra over $W$. Also denote by $\pp=\pp_{\rm sub}(V)$ the 
projective space of dimension $q-1$ over $V$.
By applying the functor $\Gamma_*$ to the complex $\underline{\llll}_X^p$, we get the complex $\llll^p_X$ 
of $S$-graded modules in homological degree $0$ to $d$
% as $\Gamma_{*}(\underline{\llll}^p_X)$ (where the complex $\underline{\llll}_X^p$ is the complex \eqref{BGG} from Section I)
\begin{eqnarray*}
\llll_X^p:\quad  
0\longrightarrow S\otimes_{\cc} H^0(X,\Omega_X^p)\longrightarrow S\otimes_{\cc} H^1(X,\Omega_X^p)\longrightarrow 
\ldots \longrightarrow S\otimes_{\cc} H^d(X,\Omega_X^p)\longrightarrow 0.
\end{eqnarray*}
(see \cite{Ha} p. 118 for the definition of $\Gamma_*$).
% (recall that for a sheaf $\sF$ in $\pp^n$, we have $\Gamma_{*}(\sF):=\oplus_{m\geq 0}H^0(\pp^n,\sF(m))$).
% Mimicking the proof of Lemma 2.3 in \cite{LP} we can show that the BGG complex $\llll(P_X^p)$ is 
% isomorphic to the complex $\llll_X^p$. 
% 
% The modules $P_X^p$ and $Q_X^p$ are dual modules via Serre duality once we assign to $H^i(X,\Omega_X^p)$ degree $d-i$ and 
% $H^i(X,\Omega_X^{d-p})$ degree $-i$. Therefore by Proposition \ref{BGGreg} we can apply
% the BGG correspondence to the module $Q_X^p$ to study the regularity of $P_X^p$. 
% At this point Theorem \ref{thm-intr1} follows from the following Proposition which bounds the exactness of the complexes $\llll_X^p$
% and $\underline{\llll}_X^p$ in terms of a different set of parameters with respect to the ones used in Proposition \ref{exact}.
To bound the regularity of the module $Q_X^p=\bigoplus_i H^i(X,\Omega_X^p)$ is enough to understand until which step the complex 
$\llll_X^{d-p}$ is exact. In fact, by following the proof of \cite{LP} Lemma 2.3, one can show that the complex $\llll_X^{d-p}$ is identified 
with the complex associated to the module $P_X^p=\bigoplus H^i(X,\Omega_X^{d-p})$ via the BGG correspondence 
and one can use Proposition 2.2 (\emph{cf. loc. cit.}) to bound the regularity of $Q_X^p$. 
At this point Theorem \ref{thm-intr1} 
follows by the previous discussion together with point (i) of the following Proposition.
\begin{prop}\label{alb}
Let $X$ be an irregular compact K\"{a}hler manifold of dimension $d$. 
Let $k$ be the dimension of the generic fiber of the Albanese map ${\rm alb}_X:X\longrightarrow {\rm Alb}(X)$ 
and the let $f$ be the maximal dimension of a fiber of ${\rm alb}_X$. Set $l={\rm max}\{k,f-1\}$. 
\begin{enumerate}
 \item If $d-p>l$ then the complexes $\llll_X^p$ and $\underline{\mathbf{L}}_X^p$ are exact in the first $d-p-l$ steps from the left.\\
 \item If $p>l$ then the complexes $\llll_X^p$ and $\underline{\mathbf{L}}_X^p$ are exact in the first $p-l$ steps from the right.
\end{enumerate}
\end{prop}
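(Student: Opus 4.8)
The plan is to imitate the method of Lazarsfeld and Popa from \cite{LP} for the cases $p=0$ and $p=d$, replacing Generic Vanishing for $\omega_X$ by Generic Vanishing for the bundles $\Omega_X^p$ supplied by \cite{GV} and \cite{CH}. First I would reduce (ii) to (i) by Serre duality, just as in the proof of Proposition \ref{exact}(ii): Serre duality on $X$ identifies the $\sO_{\pp}$-dual of $\underline{\mathbf{L}}_X^p$ with a twist of $\underline{\mathbf{L}}_X^{d-p}$, so exactness of $\underline{\mathbf{L}}_X^p$ in the first $p-l$ steps from the right is equivalent to exactness of $\underline{\mathbf{L}}_X^{d-p}$ in the first $p-l$ steps from the left; since $p>l$ is the same as $d-(d-p)>l$, this is precisely part (i) for $\underline{\mathbf{L}}_X^{d-p}$, and the same passage applies to $\mathbf{L}_X^p$. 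So it suffices to prove part (i).

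For (i) I would argue through the BGG correspondence rather than fiberwise. A fiberwise argument as in Proposition \ref{exact} is not available, because the cohomology sheaves of $\underline{\mathbf{L}}_X^p$ can vanish even at points $[v]\in\pp$ where the derivative complex \eqref{derivative} is not exact, so one cannot simply invoke \cite{GL1}. Instead, following \cite{LP} Lemma 2.3, one identifies $\mathbf{L}_X^p$, up to a twist, with the complex attached by BGG to the graded $E$-module $\bigoplus_i H^i(X,\Omega_X^p)$; by \cite{LP} Proposition 2.2 the exactness of this complex in the first $N$ steps from the left is controlled by a regularity bound for that module, equivalently by a lower bound for the codimensions of its cohomological support loci $V^j(\Omega_X^p)=\{\alpha\in{\rm Pic}^0(X)\mid H^j(X,\Omega_X^p\otimes\alpha)\neq 0\}$. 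The bound one needs is $\codim V^j(\Omega_X^p)\ge j-l$ in the relevant range of $j$, with defect exactly $l=\max\{k,f-1\}$, and this is what the Green-Lazarsfeld generic Nakano vanishing together with its refinements in \cite{GV} and \cite{CH} yields: the generic fiber dimension $k$ is responsible for the generic vanishing, while the maximal fiber dimension $f$ enters, through the cohomology supported on the positive-dimensional fibers of ${\rm alb}_X$, as the correction $f-1$. Feeding this estimate back through \cite{LP} Proposition 2.2 gives exactness of $\mathbf{L}_X^p$, hence of its sheafification $\underline{\mathbf{L}}_X^p$, in the first $d-p-l$ steps from the left.

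In order, then: (1) reduce (ii) to (i) by Serre duality; (2) identify $\mathbf{L}_X^p$ with the BGG complex of $\bigoplus_i H^i(X,\Omega_X^p)$ up to a twist, as in \cite{LP} Lemma 2.3; (3) invoke the Generic Vanishing Theorems of \cite{GV} and \cite{CH} to obtain $\codim V^j(\Omega_X^p)\ge j-l$; (4) translate this, via \cite{LP} Proposition 2.2, into exactness of $\mathbf{L}_X^p$ and $\underline{\mathbf{L}}_X^p$ in the first $d-p-l$ steps from the left. The hard part, I expect, will be step (3): checking that the Generic Vanishing bound for $\Omega_X^p$ really carries the sharp defect $l=\max\{k,f-1\}$ --- the value $k$ alone, which suffices when $p=d$ as in \cite{LP} Theorem B, is not enough for intermediate $p$ --- and then step (4), transporting a codimension estimate for the $V^j(\Omega_X^p)$ faithfully into sheaf-level exactness on $\pp$, for which the higher cohomology of the syzygy bundles of the complex has to be controlled as in \cite{LP}.
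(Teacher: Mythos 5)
Your instinct that Generic Vanishing for $\Omega_X^p$ must replace Generic Vanishing for $\omega_X$, and your observation that a fiberwise argument as in Proposition \ref{exact} is unavailable, are both correct; but the mechanism you propose for part (i) is circular. \cite{LP} Proposition 2.2 is an \emph{equivalence} between the regularity of the exterior module and the exactness of its BGG complex, so to run it in the direction ``regularity $\Rightarrow$ exactness'' you need an independent proof that $\bigoplus_i H^i(X,\Omega_X^p)$ satisfies the asserted regularity bound, and your step (3) does not supply one: there is no result that converts the codimension estimates $\codim V^j(\Omega_X^p)\geq j-l$ directly into a regularity bound for the $E$-module. In this paper, as in \cite{LP}, the implication runs the other way: Theorem \ref{thm-intr1} is \emph{deduced from} Proposition \ref{alb} via the BGG correspondence, so your outline assumes what is to be proved. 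The actual proof works at the level of the affine/analytic derivative complex: one writes $\llll_X^p=\Gamma(\mathbf{A},\sK_p)$, where $\sK_p$ is the complex of trivial bundles on the affine space $\mathbf{A}$ attached to $V=H^1(X,\sO_X)$ (so exactness of $\sK_p$ yields exactness of $\llll_X^p$ and, by sheafifying, of $\underline{\llll}_X^p$), passes to the analytic complex $\sK_p^{\rm an}$ by GAGA, uses the fact that the differentials scale linearly in radial directions to reduce to the stalks at the origin, and then invokes Theorem 6.2 of \cite{CH}, which identifies $\sH^i(\sK_p^{\rm an})_0$ with $R^ip_{2*}(p_1^*\Omega_X^p\otimes\sP)_0$; the required vanishing $R^ip_{2*}(p_1^*\Omega_X^p\otimes\sP)=0$ for $i<d-p-l$ is exactly what Theorems 5.11(1) and 3.7 of \cite{GV} provide. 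This local identification of the cohomology of the derivative complex with the Fourier--Mukai transform of $\Omega_X^p$ is the missing bridge in your argument; note also that even the step ``codimension bounds $\Rightarrow$ vanishing of all the $R^ip_{2*}$ in a range'' is not a routine base-change statement but the content of the duality/WIT criterion in \cite{GV}.

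There is a further gap in your reduction of (ii) to (i): dualizing a complex of locally free sheaves over $\sO_{\pp}$ does not preserve exactness of cohomology sheaves (the $\sO_{\pp}$-dual of the injection $\sO_{\pp}(-1)\stackrel{x}{\longrightarrow}\sO_{\pp}$ is not surjective), and the Serre-duality trick dualizes cleanly only in the fiberwise, constant-rank situation of Proposition \ref{exact} --- which is precisely what fails here at special points $[v]$. For Proposition \ref{alb}(ii) the duality must instead be fed through the same stalk-level argument, i.e. applied to the vanishing of the relevant higher direct images (equivalently, one reruns the argument of (i) for the top range of indices); this is what is meant by ``analogous to the proof in Proposition \ref{exact}.''
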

\begin{proof}
We follow \cite{LP} Proposition 1.1. 
Let $\mathbf{A}=\mbox{Spec(Sym}(W))$ be the affine space corresponding to $V$ viewed as an affine algebraic variety.
Consider the following complex $\sK_p$ of trivial locally free sheaves on $\mathbf{A}$:
$$0\longrightarrow \sO_{\mathbf{A}}\otimes H^0(X,\Omega_X^p)
\longrightarrow \sO_{\mathbf{A}}\otimes H^1(X,\Omega_X^p)\longrightarrow \ldots \longrightarrow 
\sO_{\mathbf{A}}\otimes H^d(X,\Omega_X^p)\longrightarrow 0,$$
with maps given at each point of $\mathbf{A}$ by cupping with the corresponding element of $V$. 
Since $\Gamma(\mathbf{A},\sO_{\mathbf{A}})=S$ one sees that 
\begin{eqnarray}\label{gamma}
\mathbf{L}_X^p=\Gamma(\mathbf{A},\sK_p)
\end{eqnarray}
i.e. $\mathbf{L}_X^p$ is obtained from $\sK_p$ by applying the global sections functor. 
On the other hand the complex $\underline{\llll}_X^p$ is 
obtained by $\llll_X^p$ via sheafification which is an exact functor, and hence its exactness is
implied by the exactness of the complex $\llll_X^p$, and consequently by the exactness of $\sK_p$
since the functor global sections is exact on affine spaces. Let 
$\mathbf{V}$ be the vector space $V$ considered as a complex manifold.
By GAGA, the exactness of $\sK_p$ 
is equivalent to the exactness of $\sK_p^{{\rm an}}$, where $\sK_p^{{\rm an}}$ is the complex
$$0\longrightarrow \sO_{\mathbf{V}}\otimes H^0(X,\Omega_X^p)\longrightarrow \sO_{\mathbf{V}}\otimes H^1(X,\Omega_X^p)\longrightarrow \ldots 
\longrightarrow \sO_{\mathbf{V}}\otimes H^d(X,\Omega_X^p)\longrightarrow 0$$
of analytic sheaves on $\mathbf{V}$.
If $d-p>l$ then it is enough to check the exactness of $\sK_p^{{\rm an}}$ at the first $d-p-l$ steps from the left, or equivalently 
the vanishing of the cohomologies $\sH^i(\sK_p^{{\rm an}})$ for any $i<d-p-l$. Since the 
differentials of the complex $\sK_p^{{\rm an}}$ scale linearly in radial directions through the origin, it is then enough 
to check the vanishing of its stalks at the origin $0$, i.e.
$$\sH^i(\sK_p^{{\rm an}})_0=0$$ for $i<d-p-l$. 

Let now $p_1:X\times {\rm Pic}^0(X)\longrightarrow X$ and 
$p_2:X\times {\rm Pic}^0(X)\longrightarrow {\rm Pic}^0(X)$ be the projections onto the first and second
factor respectively and let $\sP$ be a normalized Poincar\'{e} line bundle on $X\times {\rm Pic}^0(X)$. 
Then Theorem 6.2 in \cite{CH} gives an identification
$$\sH^i(\sK_p^{{\rm an}})_0 \cong R^ip_{2*}(p_1^{*}\Omega_X^p\otimes \sP)_0, $$ via the 
exponential map ${\rm exp}:V\longrightarrow {\rm Pic}^0(X)$.
At this point Theorem 5.11 (1) and Theorem 3.7 in \cite{GV} imply that $R^ip_{2*}(p_1^{*}\Omega_X^p\otimes \sP)=0$ for any $i<d-p-l$.

The proof of (ii) is analogous to the proof in Proposition \ref{exact}.
\end{proof}
 
\addcontentsline{toc}{chapter}{References}

\end{document}